\documentclass[11pt,a4paper]{article}
\usepackage[left=2cm, top=2.5cm,bottom=2.5cm,right=2cm]{geometry}
\usepackage{mathtools,amssymb,amsthm,mathrsfs,calc,graphicx,xcolor,cleveref,dsfont,tikz,pgfplots,bm,url,tabularx}
\usepackage[british]{babel}
\usepackage{amsfonts}             
\usepackage[T1]{fontenc}
\usepackage{enumitem}
\usepackage[color=green]{todonotes}
\usepackage{tikz-cd}
\usetikzlibrary{calc} 
\usetikzlibrary{intersections}
\usepackage[labelfont=bf]{caption}
\usepackage[font=small]{subcaption}

\theoremstyle{plain}
\newtheorem{theorem}{Theorem}
\newtheorem{lemma}[theorem]{Lemma}
\newtheorem{corollary}[theorem]{Corollary}
\newtheorem{proposition}[theorem]{Proposition}

\theoremstyle{definition}

\theoremstyle{remark}
\newtheorem{remark}[theorem]{Remark}

\newcommand{\cov}{\operatorname{Cov}}

\newcommand{\vol}{\mathop{\mathrm{vol}}\nolimits}

\def\CC{\mathbb{C}}

\def\EE{\mathbb{E}}

\def\HH{\mathbb{H}}

\def\NN{\mathbb{N}}

\def\PP{\mathbb{P}}

\def\RR{\mathbb{R}}
\def\SS{\mathbb{S}}

\def\cA{\mathcal{A}}

\def\cH{\mathcal{H}}

\newcommand{\Res}{\mathop{\mathrm{Res}}\nolimits}

\newcommand{\eps}{\varepsilon}

\newcommand{\E}{\mathbb{E}}
\newcommand{\Var}{\mathrm{Var}}

\newcommand{\arcosh}{\mathrm{arcosh\,}}

\DeclareMathOperator{\re}{Re}

\makeatletter
\let\@fnsymbol\@alph
\makeatother

\begin{document}
	
	\title{\bfseries The volume of hyperbolic Poisson zero cells:\\ critical divergence and exact second moment}
	
	\author{Tillmann B\"uhler\footnotemark[1]\;\; and Christoph Th\"ale\footnotemark[2]}
	
	\date{}
	\renewcommand{\thefootnote}{\fnsymbol{footnote}}
    \footnotetext[1]{Karlsruhe Institute of Technology, Germany. Email: tillmann.buehler@kit.edu}
	\footnotetext[2]{Ruhr University Bochum, Germany. Email: christoph.thaele@rub.de}
	
	\maketitle
	
	\begin{abstract}
		\noindent
		We investigate the second volume moment of the zero cell $Z_o$ of a Poisson hyperplane tessellation with intensity $\gamma$ in the $d$-dimensional hyperbolic space. We focus on the phase transition at the critical intensity $\gamma_c^{(d)}$, the minimum value for which $Z_o$ is almost surely bounded. In the critical regime $\gamma=\gamma_c^{(d)}$, we show that the second volume moment of the restricted zero cell $Z_o \cap B_R$, where $B_R$ is a hyperbolic ball of radius $R$ centred at $o$, diverges in any dimension at the universal rate $R^3$ as $R \to \infty$. In the supercritical case $\gamma > \gamma_c^{(d)}$, we prove that the full second volume moment is finite. Using tools from harmonic analysis in hyperbolic space, we derive an exact expression for this moment in terms of the Meijer $G$-function. Furthermore, we determine the asymptotic behaviour of the second moment as $\gamma \to \infty$ and as $\gamma \downarrow \gamma_c^{(d)}$, facilitating a direct comparison with the corresponding Euclidean values as well as the mean-field universality class of percolation theory.
		\\
		
		\noindent {\bf Keywords:} Harmonic analysis, hyperbolic stochastic geometry, Meijer $G$-function, percolation, phase transition, Poisson hyperplane process, second volume moment, zero cell\\
		{\bf MSC:} Primary: 60D05. Secondary: 33C60, 43A85, 51M10, 52A22
	\end{abstract}

\section{Introduction and main results}

Poisson hyperplane tessellations are among the classical models in stochastic geometry. They are generated by a Poisson process on the space of hyperplanes in $\mathbb{R}^d$, whose atoms subdivide the ambient space into a countable collection of random convex polytopes, called cells. A prominent object of study in this context is the so-called zero cell $Z_o$, defined as the cell containing the origin. The geometric properties of $Z_o$, such as its number of $k$-faces, surface area, and volume, have been extensively studied since the early days of stochastic geometry. In particular, the first moments of various geometric functionals associated with $Z_o$ can be computed explicitly. For a stationary and isotropic Poisson hyperplane tessellation in $\RR^d$ with intensity $\gamma>0$, an explicit formula for the second moment of the volume is also available. Namely, if $\vol_d(Z_o)$ denotes the $d$-dimensional volume of $Z_o$, we have
\begin{equation}\label{eq:SecondMomentEuclidean}
    \EE\vol_d(Z_o)^2 = \frac{2^{4d} \pi^{\frac{4d-3}{2}}}{\gamma^{2d}} \frac{\Gamma\left(d+\frac{3}{2}\right)}{\Gamma\left(\frac{3d+3}{2}\right)} \frac{\Gamma\left(\frac{d+1}{2}\right)^{2d+3}}{\Gamma\left(\frac{d}{2}\right)^{2d}},
\end{equation}
see \cite[pages 178--179]{Matheron} and \cite[Theorem 16.7.1]{HS}.

The situation changes significantly when moving from the Euclidean space to the hyperbolic space $\mathbb{H}^d$. The negative curvature and the exponential volume growth of that space introduce phase transitions that have no Euclidean counterparts. A systematic study of Poisson hyperplane tessellations in $\mathbb{H}^d$ has attracted renewed attention recently, revealing that the intensity $\gamma$ plays a critical role in the topology of the tessellation cells. A particular feature of hyperbolic Poisson hyperplane tessellations is the existence of a dimension-dependent sharp threshold for the boundedness of the zero cell $Z_o$, which is defined as the almost surely unique cell containing a fixed reference point $o\in\HH^d$, also called the origin. Let
\begin{equation}\label{eq:CriticalIntensity}
	\gamma_c^{(d)} \coloneqq \sqrt{\pi}(d-1) \frac{\Gamma(\frac{d+1}{2})}{\Gamma(\frac{d}{2})}
\end{equation}
denote the critical intensity in dimension $d$. It is known from \cite{BuehlerGusakovaRecke,GodlandKabluchkoThaeleBetaStar,PorretBlanc} that for intensities $\gamma < \gamma_c^{(d)}$, the zero cell $Z_o$ is unbounded with strictly positive probability. Conversely, for $\gamma \geq \gamma_c^{(d)}$, $Z_o$ is almost surely bounded, compare with the simulations shown in Figure \ref{fig:Simulations} for the planar case $d=2$, where $\gamma_c^{(2)} = \pi/2$.

\begin{figure}[t]
    \centering
    \includegraphics[width=0.3\linewidth]{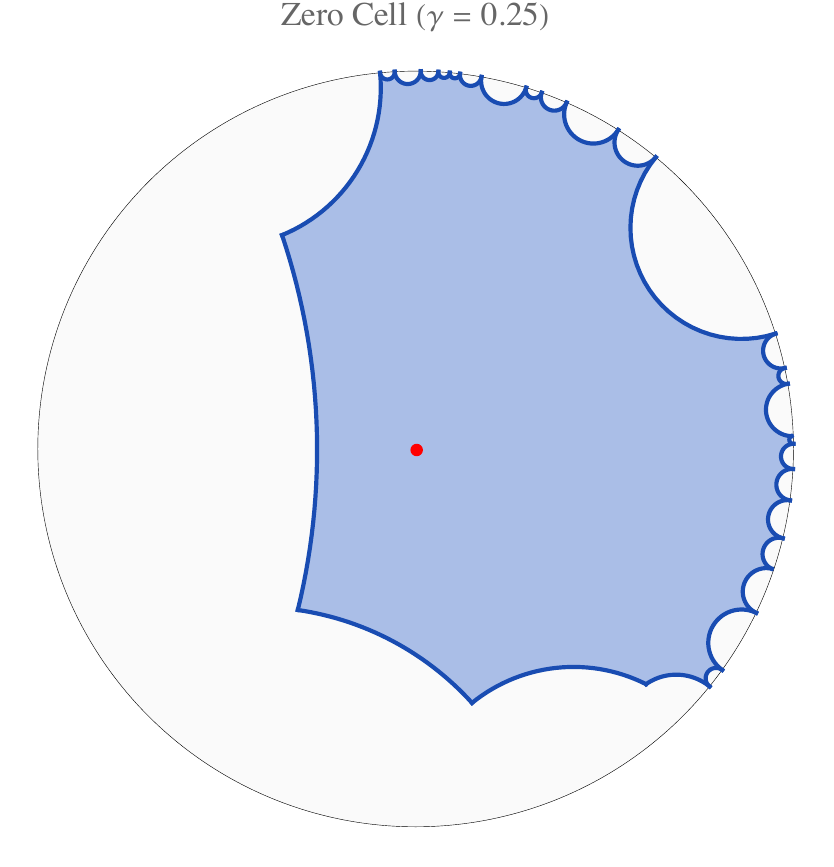}\quad
    \includegraphics[width=0.3\linewidth]{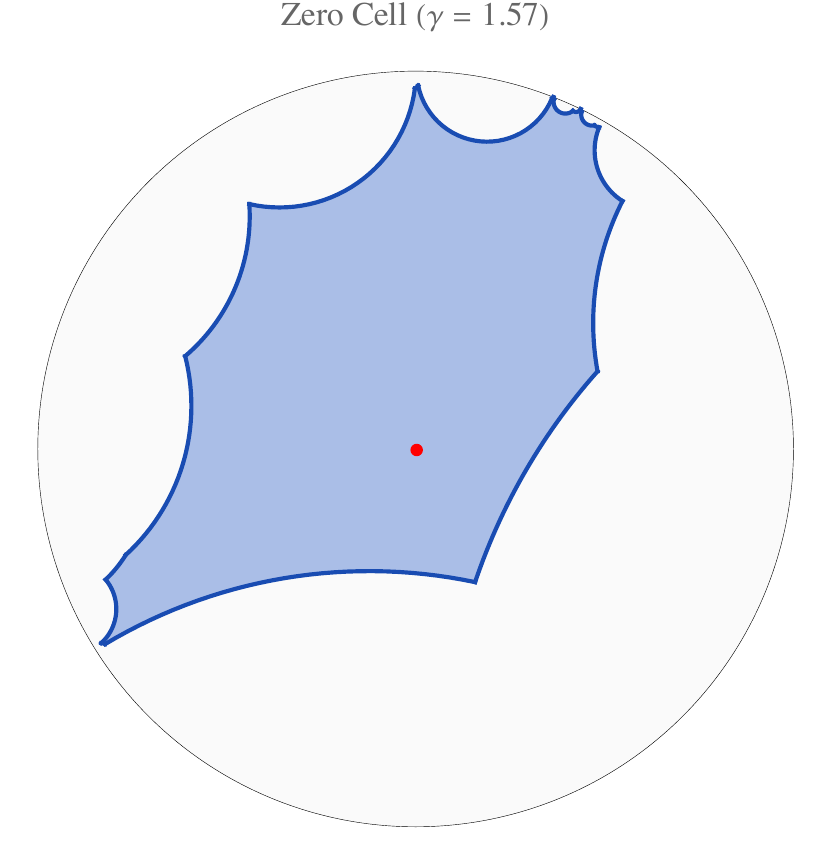}\quad
    \includegraphics[width=0.3\linewidth]{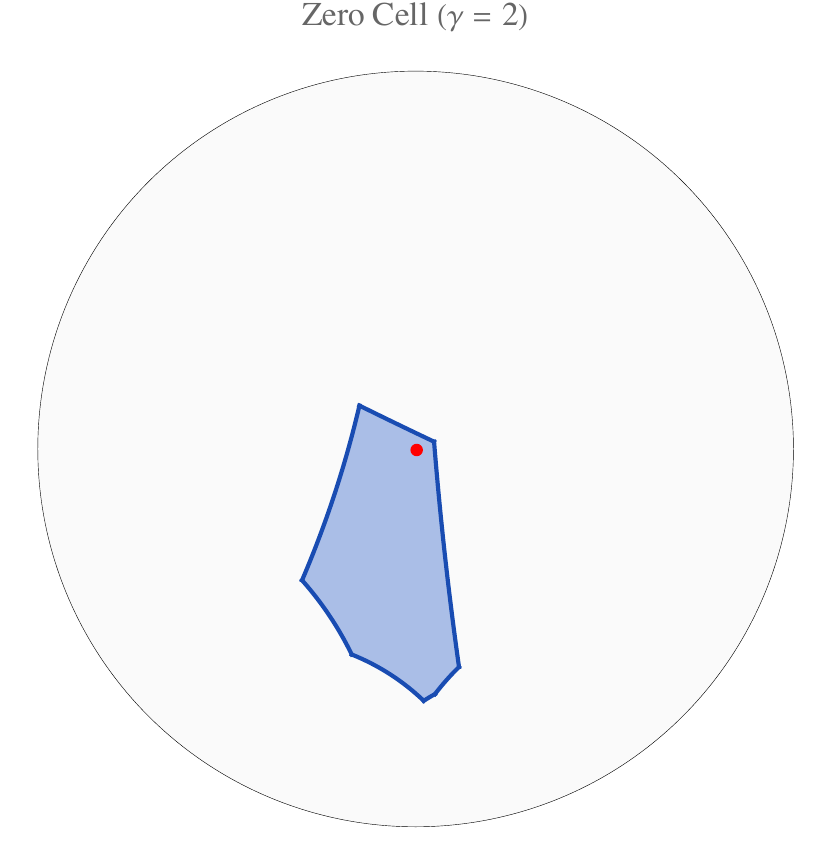}
    \caption{Simulations of Poisson zero cells $Z_o$ for different intensities $\gamma$ in the Poincar\'e disc model for $\HH^2$. Left: $\gamma=0.25$ (subcritical). Middle: $\gamma=\pi/2\approx 1.57$ (critical). Right: $\gamma=2$ (supercritical).}
    \label{fig:Simulations}
\end{figure}

This raises a natural quantitative question: How do volume fluctuations behave at and above the boundedness threshold? In this paper, we focus on the second moment of the volume of the zero cell, $\mathbb{E}\mathcal{H}^d(Z_o)^2$, in hyperbolic space. Here, $\cH^d$ denotes the $d$-dimensional Hausdorff measure on $\HH^d$. The analysis of this second moment is significantly more involved than that of the first moment, dealt with in \cite[Section 7]{BuehlerHugThaeleBM}, due to the need to integrate over pairs of points $(x,y) \in \mathbb{H}^d \times \mathbb{H}^d$. By Fubini's theorem, for fixed $R>0$, the second moment can be expressed as
\begin{equation*}
	\mathbb{E}\mathcal{H}^d(Z_o\cap B_R)^2 = \int_{B_R}\int_{B_R}\mathbb{P}(x,y\in Z_o)\,\mathcal{H}^d(dx)\mathcal{H}^d(dy),
\end{equation*}
where we consider the intersection of $Z_o$ with a hyperbolic ball $B_R$ of radius $R$ centered at $o$ to localize the second moment in the critical regime where moments diverge. 

Our first main result concerns the critical case $\gamma = \gamma_c^{(d)}$. It was established in \cite{PorretBlanc} that at this intensity, although $Z_o$ is bounded almost surely, the expected volume diverges, $\mathbb{E}\mathcal{H}^d(Z_o) = \infty$. Furthermore, if confined to the ball $B_R$, the expected volume $\mathbb{E}\mathcal{H}^d(Z_o \cap B_R)$ grows linearly in $R$, i.e.\ $b_dR\leq\EE\cH^d(Z_o\cap B_R)\leq B_d R$ for sufficiently large $R$ with constants $0<b_d,B_d<\infty$ depending only on $d$, see Corollary 6.2 and Remark 7.2 in \cite{BuehlerHugThaeleBM}. Here, we extend this analysis to the second moment and the variance. We prove that the second moment exhibits polynomial divergence with a universal cubic rate in any dimension.

\begin{theorem}[Critical cubic variance growth]\label{thm:critical}
	Let $d\ge 2$ and $\gamma = \gamma_c^{(d)}$. There are constants $0<c_d,C_d<\infty$ depending only on the dimension such that for all sufficiently large $R$, the second volume moment of the restricted zero cell satisfies
	\[
	c_d R^3 \leq \mathbb{E}\mathcal{H}^d(Z_o\cap B_R)^2 \leq C_d R^3.
	\]
	Consequently, the variance exhibits the same cubic growth.
\end{theorem}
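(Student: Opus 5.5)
The proof would reduce the second moment to an explicit triple integral over $r=d(o,x)$, $s=d(o,y)$ and the angle $\theta$ at $o$ between $x$ and $y$, and then show that the angular integral contributes a factor of order $\min(r,s)$.

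\textbf{Step 1: the two-point probability.} Since the hyperplanes form a Poisson process, the points $o,x,y$ lie in the same cell exactly when no hyperplane hits the geodesic triangle $T=\mathrm{conv}\{o,x,y\}$. Hence $\PP(x,y\in Z_o)=\exp(-\gamma\,\Lambda([T]))$, where $\Lambda$ is the invariant hyperplane measure and $[T]$ is the set of hyperplanes meeting $T$. A hyperplane in general position that meets $T$ meets exactly two of its sides. The Crofton formula for geodesic segments, $\Lambda([\overline{uv}])=\kappa_d\, d(u,v)$, then gives
\[
\Lambda([T])=\tfrac{\kappa_d}{2}\bigl(r+s+t\bigr),\qquad t=d(x,y).
\]
The constant $\kappa_d$ is fixed by the one-point formula $\PP(x\in Z_o)=e^{-\gamma\kappa_d r}$. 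Comparing with the linear growth of $\EE\cH^d(Z_o\cap B_R)$ recalled above, whose integrand is $\omega_d\sinh^{d-1}(r)e^{-\gamma\kappa_d r}$, shows that $\gamma_c^{(d)}\kappa_d=d-1$. At criticality we therefore have $\PP(x,y\in Z_o)=e^{-\frac{d-1}{2}(r+s+t)}$.

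\textbf{Step 2: polar coordinates.} Writing both points in polar coordinates about $o$ and using the hyperbolic law of cosines $\cosh t=\cosh r\cosh s-\sinh r\sinh s\cos\theta$, we obtain
\[
\EE\cH^d(Z_o\cap B_R)^2=c\int_0^R\!\!\int_0^R \sinh^{d-1}\!r\,\sinh^{d-1}\!s\;e^{-\frac{d-1}{2}(r+s)}\,I(r,s)\,dr\,ds,
\]
where
\[
I(r,s)=\int_0^\pi e^{-\frac{d-1}{2}t(r,s,\theta)}\sin^{d-2}\theta\,d\theta .
\]
The key intermediate claim is the two-sided bound
\[
I(r,s)\asymp e^{-\frac{d-1}{2}(r+s)}\bigl(1+\min(r,s)\bigr)\qquad\text{for } r,s\ge1 .
\]
Given this claim, the integrand is comparable to $1+\min(r,s)$, since $\sinh^{d-1}r\asymp e^{(d-1)r}$ for $r\ge1$ and so the exponential factors cancel. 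The contribution of $r<1$ or $s<1$ is $O(R^2)$, by the trivial bound $I\le C$ together with the linear first-moment growth. Because $\int_0^R\int_0^R\min(r,s)\,dr\,ds=R^3/3$, the stated bounds $c_dR^3\le\EE\cH^d(Z_o\cap B_R)^2\le C_dR^3$ follow.

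\textbf{Step 3: the angular estimate (main obstacle).} For $r,s\ge1$ the law of cosines gives
\[
e^{t}\asymp e^{r+s}\bigl(\sin^2(\theta/2)+e^{-2r}+e^{-2s}\bigr),
\]
uniformly in $\theta$. I would verify this by bounding $\cosh t$ above and below by $e^{r+s}\bigl(\tfrac{1-\cos\theta}{4}+\tfrac{1}{4}(e^{-2r}+e^{-2s})\bigr)$ up to constants. Substituting,
\[
I(r,s)\asymp e^{-\frac{d-1}{2}(r+s)}\int_0^\pi \frac{\theta^{d-2}}{\bigl(\theta^2+e^{-2\min(r,s)}\bigr)^{(d-1)/2}}\,d\theta .
\]
In the remaining integral, the range $\theta\le e^{-\min(r,s)}$ contributes $O(1)$. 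On the range $e^{-\min(r,s)}\le\theta\le\pi$ the integrand is comparable to $\theta^{-1}$, which produces $\min(r,s)+O(1)$. This logarithmic divergence at small angles, which comes from pairs of points lying in nearly the same direction, is exactly what upgrades the naive $R^2$ to $R^3$, and it is the delicate point. The uniform two-sided comparison of $e^{t}$ must be checked carefully, in particular near $\theta=0$ when $r$ and $s$ are very different.

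\textbf{Step 4: the variance.} Finally, $\Var\cH^d(Z_o\cap B_R)=\EE\cH^d(Z_o\cap B_R)^2-\bigl(\EE\cH^d(Z_o\cap B_R)\bigr)^2$. The subtracted term is at most $B_d^2R^2=o(R^3)$, so the variance inherits the cubic growth.
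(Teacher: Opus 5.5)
Your proposal is correct and is essentially the paper's proof. The paper also splits the angular integral at $\theta=e^{-\min(r,s)}$ and bounds the inner range by a constant using $d_h(x,y)\ge|r-s|$. On the outer range it shows the integrand is comparable to $\theta^{-1}$, which produces the factor $\min(r,s)$ and hence $R^3/3$; the only difference is that it proves the upper and lower bounds separately with explicit constants rather than through your two-sided comparison for $e^{t}$.

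One small repair is needed for the region $r<1$ or $s<1$. The bound $I\le C$ on its own does not give $O(R^2)$, because it leaves the exponentially growing factor $\sinh^{d-1}(s)\,e^{-\frac{d-1}{2}s}$. You need $t\ge|r-s|$, equivalently $\PP(x,y\in Z_o)\le\PP(y\in Z_o)$, to compare with the first moment, which then gives $O(R)$. Alternatively, run your upper estimate for all $r,s\in[0,R]$, as the paper does.
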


Our second main contribution concerns the supercritical regime $\gamma>\gamma_c^{(d)}$. In this case, the strong decay of the two-point probabilities $\PP(x,y\in Z_o)$ implies that the second volume moment is finite. We derive an exact analytic expression for this moment, which naturally involves the Meijer $G$-function. Recall that the Meijer $G$-function is a classical special function in the theory of hypergeometric functions and integral transforms, see \cite{MathaiSaxena}. It is defined by a Mellin--Barnes type contour integral and will be introduced formally in \eqref{eq:DefGFunction} below. Our derivation relies on tools from harmonic analysis on hyperbolic space. More precisely, we reinterpret the relevant second-moment integral in a spectral form and evaluate it by means of the spherical Fourier transform and the Plancherel formula. This is conceptually parallel to the Euclidean case \cite{HS,Matheron}, where Fourier-analytic methods also play a central role, but the hyperbolic setting is substantially more intricate due to the much more delicate structure of the corresponding harmonic analysis. While harmonic-analytic methods have previously been used in probabilistic models related to hyperbolic spaces, for instance in the study of radial random walks \cite{Voit13}, hyperbolic random connection models \cite{Dickson}, or hyperuniform random measures \cite{BjorglundBylehn}, their systematic use to derive explicit moment formulas in hyperbolic stochastic geometry is new.

\begin{theorem}[Exact supercritical second moment]\label{thm:supercritical}
	Consider a hyperbolic Poisson hyperplane tessellation in $\HH^d$ with intensity $\gamma > \gamma_c^{(d)}$. The second moment of the volume of the zero cell is given by
\[
\EE\cH^d(Z_o)^2\!
=\! \frac{\gamma^3 \pi^{d-2}}{32} \Gamma\left(\frac{d}{2}\right)^2\!\!
G_{10, 10}^{5, 5} \left(
1 \middle| \begin{matrix}
	1-a,\,1-a,\,1-a,\,\frac{5-d}{4},\,\frac{3-d}{4},\,a+\frac{d+1}{2},\,a+\frac{d+1}{2},\,a+\frac{d+1}{2},\,0,\,\frac{1}{2} \\
	a,\,a,\,a,\,\frac{d-1}{4},\,\frac{d+1}{4},\,\frac{1-d}{2}-a,\,\frac{1-d}{2}-a,\,\frac{1-d}{2}-a,\,1,\,\frac{1}{2}
\end{matrix}
\right)\!,
\]
where $a = \frac{1}{4} \big( \frac{\gamma}{\sqrt{\pi}} \frac{\Gamma\left(\frac{d}{2}\right)}{\Gamma\left(\frac{d+1}{2}\right)} - d + 1 \big)$.
\end{theorem}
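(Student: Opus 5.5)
Starting from the Fubini representation $\EE\cH^d(Z_o)^2=\int\int \PP(x,y\in Z_o)\,\cH^d(dx)\cH^d(dy)$, the first step is the two-point probability. By the Poisson property, $\PP(x,y\in Z_o)=\exp(-\gamma\,\mu([o,x,y]))$, where $\mu([o,x,y])$ is the invariant measure of hyperplanes hitting the geodesic triangle (convex hull) of $o,x,y$. By the hyperbolic Crofton formula, the hyperplanes meeting a convex planar set $K$ satisfy $\mu(K)=c_d\,\cH^1(\partial K)/2$ for a suitable normalisation $c_d$. Since a hyperplane meets a segment at most once, $\mu([o,x,y])=\tfrac{c_d}{2}\big(d(o,x)+d(o,y)+d(x,y)\big)$. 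Hence
\[
\PP(x,y\in Z_o)=f(d(o,x))\,f(d(o,y))\,f(d(x,y)),\qquad f(r)=e^{-\kappa\gamma r},
\]
with $\kappa$ fixed by matching $\PP(x\in Z_o)=e^{-\kappa\gamma d(o,x)}$ to the known one-point formula. The second moment is therefore the trilinear form $\int\int F(x)F(y)\,f(d(x,y))\,dx\,dy=\langle F, F*f\rangle$, where $F(x)=f(d(o,x))$ is radial and $*$ is hyperbolic convolution with the radial kernel $f$.

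Both $F$ and $f$ are radial, so the spherical (Harish-Chandra) Fourier transform diagonalises the convolution. The Plancherel formula then gives
\[
\EE\cH^d(Z_o)^2=c\int_0^\infty \widehat f(\lambda)^3\,|\mathbf c(\lambda)|^{-2}\,d\lambda ,
\]
with Harish-Chandra $\mathbf c$-function $|\mathbf c(\lambda)|^{-2}\propto \big|\Gamma(i\lambda+\tfrac{d-1}{2})/\Gamma(i\lambda)\big|^2$. Next I compute $\widehat f(\lambda)=\omega_{d-1}\int_0^\infty e^{-\kappa\gamma r}\varphi_\lambda(r)\sinh^{d-1}r\,dr$, where $\varphi_\lambda$ is the spherical function, a ${}_2F_1$ in $-\sinh^2 r$. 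This Laplace-type integral is finite exactly when $\kappa\gamma>(d-1)/2$, i.e.\ $a>0$, which is the supercriticality condition. I expect it to evaluate to a product of Gamma functions, something like $\Gamma(a\pm i\lambda/2)/\Gamma(a+\tfrac{d+1}{2}\pm i\lambda/2)$ up to constants. The threefold multiplicities $a,a,a$ and $a+\tfrac{d+1}{2}$ in the $G$-function strongly suggest this form cubed. I would derive it via the Mellin–Barnes representation of $\varphi_\lambda$, or via the Abel transform, under which radial convolution becomes Euclidean and $\widehat f$ becomes a one-dimensional Fourier transform of an explicit function.

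Finally I substitute $s=i\lambda/2$ and use the symmetry in $\lambda$ to rewrite the Plancherel integral as a Mellin–Barnes contour integral along the imaginary axis. The $\mathbf c$-function factor, after the duplication formula, supplies the parameters $\tfrac{d\pm1}{4},\tfrac{3-d}{4},\tfrac{5-d}{4}$ together with the $0,\tfrac12,1,\tfrac12$ pair. Matching with the definition \eqref{eq:DefGFunction} then identifies $G^{5,5}_{10,10}$ at argument $1$. Constants such as $\gamma^3$, $\pi^{d-2}$ and $\Gamma(d/2)^2$ come from $\omega_{d-1}$, the Plancherel normalisation and $\kappa$.

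The main obstacle is the exact closed form of $\widehat f(\lambda)$ together with careful bookkeeping of the Gamma factors. This includes verifying that the contour separates the poles correctly when $a>0$, and that the ratio structure yields convergence at argument $1$ (with $p=q$ this is borderline, and one must check the balance condition on the parameters). The justification of Plancherel for the nonnegative $L^2$ functions involved, valid for $a>0$, is routine by comparison.
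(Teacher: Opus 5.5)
Your outline is essentially the paper's proof: Crofton gives $\PP(x,y\in Z_o)=f(x)f(y)f(x^{-1}y)$ with $f=e^{-\kappa\gamma d_h(o,\cdot)}$, the moment becomes $\langle f,f*f\rangle=\int_0^\infty\widehat f(\lambda)^3\,d\nu(\lambda)$, the paper computes $\widehat f$ through the Abel-type (Mehler) integral representation of $\phi_\lambda$ and obtains exactly the Gamma ratio you predict, and then duplication plus $\lambda=2x$, $s=ix$ yields $G^{5,5}_{10,10}$ at argument $1$, with the separation and balance conditions checked as you anticipate. Two points to fix: first, since $\PP(x\in Z_o)=f(d_h(o,x))^2$, matching $e^{-\kappa\gamma d_h(o,x)}$ to the one-point formula as you wrote would double $\kappa$, so keep the Crofton value $\kappa=\frac{1}{2\sqrt\pi}\Gamma(\frac d2)/\Gamma(\frac{d+1}{2})$; second, the analytic step needs $f*f\in L^2(\HH^d)$ and $\widehat{f*f}=\widehat f^{\,2}$ even though $f\notin L^1(\HH^d)$ for $\gamma\le 2\gamma_c^{(d)}$, which is more than Plancherel for $f\in L^2$ --- the paper gets it from the Kunze--Stein phenomenon (using $f\in L^p$ for some $p<2$) plus truncation, and your comparison argument would have to go through $|\phi_\lambda|\le\phi_0$, i.e.\ $|\widehat f|\le\widehat f(0)<\infty$, which holds precisely when $a>0$.
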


\begin{figure}[t]
	\centering
	\includegraphics[width=0.49\columnwidth]{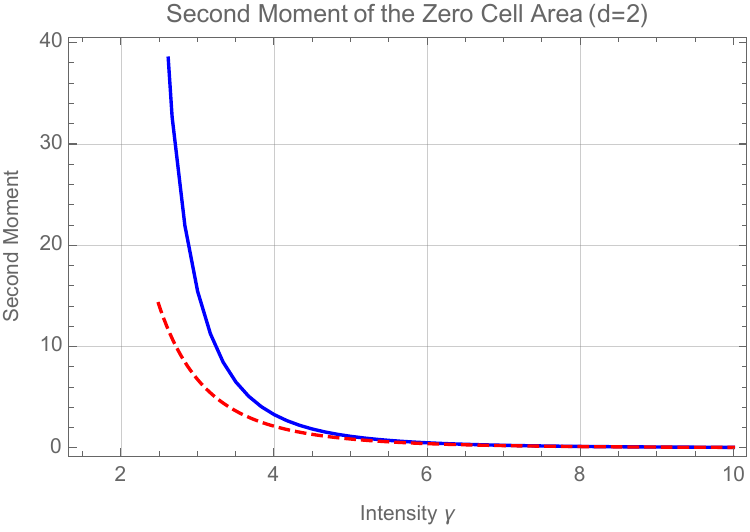}
    \includegraphics[width=0.49\columnwidth]{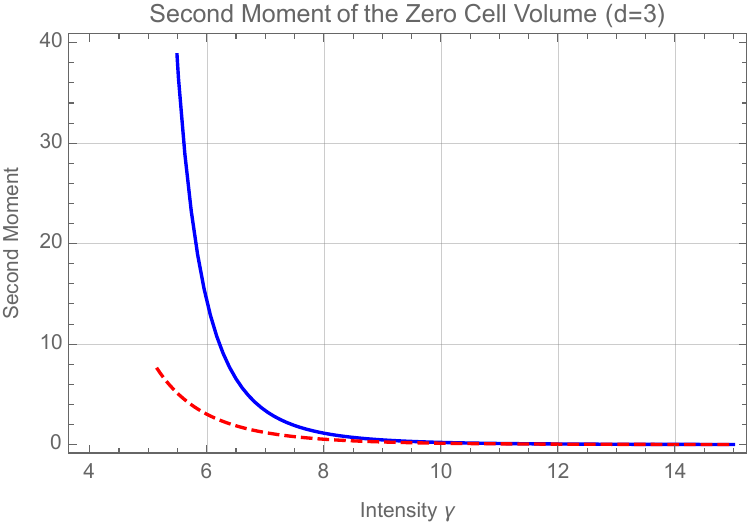}
	\caption{Blue: Exact value of $\EE\cH^2(Z_o)^2$ (left) and $\EE\cH^3(Z_o)^2$ (right) as a function of $\gamma$. Red: The Euclidean value $\frac{4\pi^6}{7\gamma^4}$ (left) and ${14336\pi^2\over\gamma^6}$ (right) as a function of $\gamma$.}
    \label{fig:2ndMom}
\end{figure}

A simplification occurs when the dimension $d$ is odd. For $d = 2k+1$ with $k \in \{1, 2, \dots\}$, the shift parameters in the Meijer $G$-function differ by $1/2$, which allows the use of the duplication formula for the Gamma function, which turns the underlying Mellin–Barnes integrand into a rational function. In what follows, we write $\Res\limits_{z=z_0}f$ for the residue of a meromorphic function \(f\) at the point \(z_0\).

\begin{corollary}[Rational form in odd dimensions]\label{cor:odd_dimensions}
	Let \(d=2k+1\) for some integer \(k\ge 1\), and let $a$ be as in Theorem \ref{thm:supercritical}.
	Then
	\[
	\EE \cH^{2k+1}(Z_o)^2
	=
	2\pi^{2k-1}\Gamma\!\left(k+\frac12\right)^2\left(\frac{\gamma}{4}\right)^3
	\sum_{m=0}^{k}
	\Res\limits_{s=-(a+m)} \frac{P_k(s)}{Q_k(s)},
	\]
	where
	\[
	P_k(s)\coloneqq\prod_{j=0}^{k-1}\left(\frac{j^2}{4}-s^2\right),
	\qquad
	Q_k(s)\coloneqq\prod_{\ell=0}^{k}\bigl((a+\ell)^2-s^2\bigr)^3.
	\]
\end{corollary}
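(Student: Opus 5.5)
We derive Corollary \ref{cor:odd_dimensions} directly from Theorem \ref{thm:supercritical}, by specialising $d=2k+1$ and closing the Mellin--Barnes contour. First we write out the defining contour integral \eqref{eq:DefGFunction} of the $G^{5,5}_{10,10}$-function at argument $1$. With $d=2k+1$ the integrand, up to a sign convention in $s$, takes the form
\[
\frac{\Gamma(a+s)^3\,\Gamma(\tfrac{k}{2}+s)\,\Gamma(\tfrac{k+1}{2}+s)\,\Gamma(a+\tfrac{d+1}{2}-s)^3\,\Gamma(\tfrac{2-k}{2}-s)\,\Gamma(\tfrac{1-k}{2}-s)\,\Gamma(\tfrac12 - s)\,\Gamma(\ldots)}{\Gamma(1-a+\ldots)\cdots}.
\]
Here the exact placement of parameters is the one dictated by the $a_j,b_j$ lists in Theorem \ref{thm:supercritical}. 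The plan is to simplify this quotient in stages.

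\emph{Step 1.} The parameters $a$ and $1-a$ appear as $\Gamma(a+s)^3/\Gamma(1-a+s)^3$ or as the reflected counterparts. The parameters $a+\frac{d+1}{2}=a+k+1$ and $\frac{1-d}{2}-a=-k-a$ pair similarly. Using $\Gamma(z+k+1)/\Gamma(z)=\prod_{\ell=0}^{k}(z+\ell)$ and the reflection formula, each such block collapses to a product of linear factors, times a factor $\pi/\sin$ that cancels against the partner block.

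\emph{Step 2.} The pair $\frac{d-1}{4}=\frac k2$, $\frac{d+1}{4}=\frac{k+1}{2}$ differs by $\frac12$. So does the pair $\frac{5-d}{4},\frac{3-d}{4}$. The duplication formula $\Gamma(z)\Gamma(z+\frac12)=2^{1-2z}\sqrt\pi\,\Gamma(2z)$ turns each pair into a single $\Gamma(2z)$. The resulting quotient $\Gamma(k+2s)/\Gamma(\cdots-2s)$ is then, again by reflection, a polynomial in $s$. After the shift $s\mapsto s$ that centres the integrand, this polynomial is $\prod_{j=0}^{k-1}(j^2/4-s^2)=P_k(s)$ up to a constant. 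The powers $2^{\pm 2s}$ from the two duplications cancel, since the argument of $G$ equals $1$.

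\emph{Step 3.} The remaining pair $(0,\frac12)$ over $(1,\frac12)$ contributes an elementary factor. I expect $\Gamma(\frac12+s)\Gamma(\frac12-s)$ against $\Gamma(s)\Gamma(1-s)$ to give a trigonometric ratio. This ratio must cancel the leftover $\pi/\sin$ factors from Step 1. Checking that all trigonometric factors cancel exactly is the main obstacle. One has to track carefully which parameters sit in the numerator ($m,n=5$) versus the denominator. I would first verify it in the case $k=1$, and then handle general $k$ by parity of the shifts.

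\emph{Step 4.} The integrand is now $c\,P_k(s)/Q_k(s)$, with $Q_k(s)=\prod_{\ell=0}^k((a+\ell)^2-s^2)^3$ arising from the cubed blocks. This is a rational function of degree $2k-6(k+1)<-1$. So the vertical contour can be closed to the left (or right) with vanishing arc contribution. The contour in \eqref{eq:DefGFunction} separates the poles at $s=a+\ell$ from those at $s=-(a+\ell)$, and $a>0$ precisely because $\gamma>\gamma_c^{(d)}$. Hence the residue theorem gives $\pm 2\pi i$ times the sum of the residues at $s=-(a+m)$, $m=0,\dots,k$. These are triple poles. Finally we collect the constants: the prefactor $\frac{\gamma^3\pi^{d-2}}{32}\Gamma(\frac d2)^2$, the factor $1/(2\pi i)$, and the $\sqrt\pi$ and powers of $2$ from duplication. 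Together these should yield $2\pi^{2k-1}\Gamma(k+\frac12)^2(\gamma/4)^3$, which we confirm by comparing against the $k=1$ case.
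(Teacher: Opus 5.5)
Your skeleton is the same as the paper's: specialise to $d=2k+1$, reduce the Mellin--Barnes integrand to $P_k/Q_k$ with the Gamma recurrence and the duplication formula, then close the contour to the left. However, the central reduction in your Steps 1--3 rests on a misreading of the Meijer $G$ convention, and as written it does not go through.

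\textbf{The correct integrand.} In \eqref{eq:DefGFunction} the parameters enter as follows:
\begin{itemize}
\item $a_j$ gives $\Gamma(1-a_j+s)$ in the numerator if $j\le n$, and $\Gamma(a_j-s)$ in the denominator if $j>n$;
\item $b_j$ gives $\Gamma(b_j-s)$ in the numerator if $j\le m$, and $\Gamma(1-b_j+s)$ in the denominator if $j>m$.
\end{itemize}
Read this way, the integrand is exactly that of \eqref{eq:SecondMomentMellinBarnes_d}. For $d=2k+1$ it is
\[
\frac{\Gamma(a+s)^3\,\Gamma(a-s)^3\,\Gamma(\frac k2+s)\,\Gamma(\frac k2-s)\,\Gamma(\frac{k+1}{2}+s)\,\Gamma(\frac{k+1}{2}-s)}{\Gamma(a+k+1+s)^3\,\Gamma(a+k+1-s)^3\,\Gamma(s)\,\Gamma(-s)\,\Gamma(\frac12+s)\,\Gamma(\frac12-s)}.
\]
There is no factor $\Gamma(1-a+s)$ and no $\Gamma(\frac{2-k}{2}-s)\Gamma(\frac{1-k}{2}-s)$. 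Also, $\Gamma(a+k+1-s)^3$ and $\Gamma(\frac12-s)$ belong in the denominator, not the numerator.

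\textbf{Step 1 fails as written.} The ratio $\Gamma(a+s)/\Gamma(1-a+s)$ is not rational in $s$ for generic $a$, since the shift $1-2a$ is not an integer, and the reflection formula does not apply to it. The correct pairing is $\Gamma(a\pm s)$ against $\Gamma(a+k+1\pm s)$. This gives $1/Q_k(s)$ from $\Gamma(z+k+1)=\Gamma(z)\prod_{\ell=0}^{k}(z+\ell)$ alone, with no $\pi/\sin$ factors.

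\textbf{Steps 2--3.} The parameters $(0,\frac12)$ and $(1,\frac12)$ are not a separate elementary factor to be matched against leftover trigonometric terms. They give $\Gamma(\pm s)\Gamma(\frac12\pm s)$ in the denominator, which duplication turns into $4\pi\,\Gamma(2s)\Gamma(-2s)$. This divides the duplicated numerator $2^{2-2k}\pi\,\Gamma(k+2s)\Gamma(k-2s)$ (a product, not a quotient as you wrote). The recurrence then gives
\[
2^{-2k}\prod_{j=0}^{k-1}(j+2s)(j-2s)=P_k(s).
\]
Neither reflection nor any trigonometric cancellation is needed. So the ``main obstacle'' you single out and leave open (check $k=1$, then argue by parity) is an artefact of the misplaced factors. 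Starting directly from \eqref{eq:SecondMomentMellinBarnes_d}, as the paper does, avoids the issue.

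\textbf{Constant and Step 4.} Once the integrand is exactly $P_k/Q_k$, the constant needs no check against $k=1$:
\begin{itemize}
\item the factors $\sqrt\pi$ cancel between numerator and denominator;
\item the powers $2^{\pm 2s}$ cancel within each duplicated pair (not because the argument of $G$ is $1$);
\item the remaining $2^{-2k}$ is absorbed into $P_k$;
\item $\frac{\gamma^3\pi^{d-2}}{32}\Gamma(\frac d2)^2=2\pi^{2k-1}\Gamma(k+\frac12)^2(\frac\gamma4)^3$ directly.
\end{itemize}
Your Step 4 is fine. Closing to the left gives a positively oriented contour, so the sign is $+$.
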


For example, if $d=3$ ($k=1$) and $d=5$ ($k=2$) we obtain\footnote{The symbolic computations for the exact evaluation of the residues in Corollary \ref{cor:odd_dimensions} were carried out using Wolfram Mathematica.}
\begin{align*}
	\EE \cH^{3}(Z_o)^2
	&=
	\frac{2048\pi^2\bigl(7\gamma^2-48\bigr)}
	{\gamma^2\bigl(\gamma^2-16\bigr)^3},\\
\EE \cH^{5}(Z_o)^2
&=
\frac{2^{40}\pi^4\bigl(891\gamma^4-50688\gamma^2+524288\bigr)}
{3^{13}\,\gamma^2\bigl(\gamma^2-\frac{256}{9}\bigr)^3\bigl(\gamma^2-\frac{1024}{9}\bigr)^3}.
\end{align*}

While expressed in terms of a special function, the exact formula of Theorem \ref{thm:supercritical} allow us to extract the asymptotics as $\gamma\to\infty$ and as $\gamma\downarrow\gamma_c^{(d)}$. In what follows, we write $f_1(\gamma)\sim f_2(\gamma)$ for two functions $f_1$ and $f_2$, provided that $f_1(\gamma)/f_2(\gamma)\to 1$ as $\gamma$ approaches the relevant limit.

\begin{corollary}[Euclidean limit]\label{cor:GammaToInfinity_d}
	Asymptotically, as the intensity $\gamma \to \infty$, the second moment of the volume of the zero cell decays according to the power law
	\[
	\EE\cH^d(Z_o)^2 \sim C_d \gamma^{-2d},\qquad C_d = 2^{4d} \pi^{\frac{4d-3}{2}} \frac{\Gamma\left(d+\frac{3}{2}\right)}{\Gamma\left(\frac{3d+3}{2}\right)} \frac{\Gamma\left(\frac{d+1}{2}\right)^{2d+3}}{\Gamma\left(\frac{d}{2}\right)^{2d}}.
	\]
\end{corollary}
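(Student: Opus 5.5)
The plan is to avoid extracting the asymptotics from the Meijer $G$-function directly. Instead I would work with the two-point representation that underlies Theorem \ref{thm:supercritical}:
\[
\EE\cH^d(Z_o)^2=\int_{\HH^d}\int_{\HH^d}\PP(x,y\in Z_o)\,\cH^d(dx)\cH^d(dy),
\qquad
\PP(x,y\in Z_o)=\exp\bigl(-\gamma\,\mu(\{H: H\cap \mathrm{conv}(o,x,y)\neq\emptyset\})\bigr).
\]
Here $\mu$ is the isometry-invariant hyperplane measure. By Crofton's formula, $\mu$ of the set of hyperplanes hitting a geodesic triangle equals a constant times its perimeter, $\kappa_d\,(d(o,x)+d(x,y)+d(o,y))/2$, with the same normalisation as in the Euclidean case. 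I would then rescale the space: substitute $x=\gamma^{-1}u$ and $y=\gamma^{-1}v$ in exponential (normal) coordinates at $o$. As $\gamma\to\infty$, the triangle shrinks to size of order $\gamma^{-1}$, and three things happen:
\begin{itemize}
\item the hyperbolic volume density $(\sinh r/r)^{d-1}$ tends to $1$;
\item the hyperbolic distances $d(\gamma^{-1}u,\gamma^{-1}v)$, multiplied by $\gamma$, converge to $|u-v|$;
\item the Jacobian contributes exactly $\gamma^{-2d}$.
\end{itemize}
Hence, pointwise in $(u,v)$,
\[
\gamma^{2d}\,\EE\cH^d(Z_o)^2=\int\int \Bigl(\tfrac{\sinh(|u|/\gamma)}{|u|/\gamma}\Bigr)^{d-1}\Bigl(\tfrac{\sinh(|v|/\gamma)}{|v|/\gamma}\Bigr)^{d-1} e^{-\gamma\,\mathrm{per}_{\HH}(\dots)/2\cdot\kappa_d}\,du\,dv
\]
converges to the Euclidean integral. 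By \eqref{eq:SecondMomentEuclidean}, that Euclidean integral is exactly $C_d$ at $\gamma=1$.

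To pass to the limit I would use dominated convergence, and the main obstacle is finding an integrable dominating function. The density factor grows like $e^{(d-1)|u|/\gamma}$. The exponential factor is at most $\exp(-\kappa_d\gamma\,(d(o,x)+d(o,y))/2)$ after dropping $d(x,y)\ge 0$ via the triangle inequality. More precisely, I would use $\mathrm{per}\ge d(o,x)+d(o,y)$, together with the fact that hyperbolic distance to $o$ in normal coordinates is exactly $|u|/\gamma$. This gives the bound
\[
\exp\bigl(-\tfrac{\kappa_d}{2}(|u|+|v|)\bigr)\,e^{(d-1)(|u|+|v|)/\gamma}.
\]
For $\gamma\ge \gamma_0$ large, this is dominated by $\exp(-\tfrac{\kappa_d}{4}(|u|+|v|))$, which is integrable on $\RR^d\times\RR^d$. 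Two checks remain. First, that the constant $\kappa_d/2$ is comparable to $\gamma_c^{(d)}/(d-1)$, so the bound is consistent with finiteness exactly for large $\gamma$; the argument needs only $\gamma$ large, so this is not an issue. Second, the convergence of $\gamma\, d_{\HH}(\gamma^{-1}u,\gamma^{-1}v)\to|u-v|$, which holds because the metric in normal coordinates is Euclidean to first order at $o$.

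The limiting integral is $\int\int e^{-\kappa_d\,\mathrm{per}_{\RR^d}(o,u,v)/2}\,du\,dv$. This is exactly the Euclidean second moment at intensity $1$, which equals the constant $C_d$ in \eqref{eq:SecondMomentEuclidean}, provided the hyperbolic and Euclidean hyperplane measures are normalised compatibly. That compatibility is the convention under which $\gamma_c^{(d)}$ in \eqref{eq:CriticalIntensity} was derived, and I would verify it by comparing first moments: the first moment scales as $\gamma^{-d}$ with the Euclidean constant. Alternatively, one could attempt the asymptotics from Theorem \ref{thm:supercritical} by shifting the Mellin–Barnes contour as $a\to\infty$ using Stirling's formula, but the geometric rescaling argument above is far more direct, so it is the route I would follow.
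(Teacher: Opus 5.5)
Your proof is correct, but it takes a genuinely different route from the paper.

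\textbf{The paper's route.} The paper never returns to the double integral over $\HH^d\times\HH^d$. It starts from the spectral representation \eqref{eq:SecondMomentIntermediate_d} and substitutes $\lambda=2au$ with $a\sim\gamma c_d/2$. It then uses the uniform Gamma-ratio asymptotics \eqref{eq:gamma_ratio} twice: for the pointwise limit $2^d u^{d-1}(1+u^2)^{-3(d+1)/2}$ and for an integrable majorant. After that, $C_d$ drops out of a Beta integral. This is close to the Stirling alternative you mention, but it is carried out on the real line without any contour shifting.

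\textbf{Your route.} You rescale in space via $\exp_o$, equivalently polar coordinates with $r=|u|/\gamma$. The pointwise limit follows from the hyperbolic law of cosines. The majorant is elementary: with $c_d=\kappa_d/2$ and $\sinh r/r\le e^r$, you get $e^{-(c_d-(d-1)/\gamma)(|u|+|v|)}\le e^{-c_d(|u|+|v|)/2}$ for $\gamma\ge 2(d-1)/c_d=4\gamma_c^{(d)}$.

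\textbf{What each buys.} Your argument needs none of the harmonic analysis (Kunze--Stein, Plancherel, Lemma \ref{lem:SphTrafo}). It also explains transparently why the Euclidean constant appears. It would extend to higher moments, where no convolution structure is available. The cost is that $C_d$ is an input rather than an output. You rely on \eqref{eq:SecondMomentEuclidean} together with matching normalisations. That matching does hold: the paper normalises $\mu$ so that a segment of length $r$ is hit with measure $\frac{\Gamma(d/2)}{\sqrt{\pi}\,\Gamma((d+1)/2)}\,r$. This is exactly the value for a stationary isotropic Euclidean hyperplane process of unit intensity in the convention underlying \eqref{eq:SecondMomentEuclidean}. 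The paper's route instead derives $C_d$ from the exact formula. Its agreement with \eqref{eq:SecondMomentEuclidean} is therefore an independent consistency check of Theorem \ref{thm:supercritical}. Combined with your rescaling argument, it would even reprove the Euclidean formula.

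\textbf{Minor slip.} $\kappa_d/2=c_d=(d-1)/(2\gamma_c^{(d)})$ is comparable to $(d-1)/\gamma_c^{(d)}$, not to $\gamma_c^{(d)}/(d-1)$. As you note, this plays no role in the argument.
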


We emphasize that the limiting value on the right-hand side coincide exactly with the known second volume moment \eqref{eq:SecondMomentEuclidean} of the zero cell in a Euclidean Poisson hyperplane tessellation in $\mathbb{R}^d$. This convergence is geometrically expected. As the intensity $\gamma \to \infty$, the zero cell $Z_o$ contracts to the origin almost surely. Since the hyperbolic space $\mathbb{H}^d$ is a Riemannian manifold and thus locally Euclidean, the influence of the negative curvature becomes negligible on the vanishing scale of the cell, recovering the Euclidean values in the limit.

We turn now to the regime in which $\gamma$ approaches the critical intensity from above. It was shown in \cite[Remark 7.2]{BuehlerHugThaeleBM} that 
\begin{equation}\label{eq:AsymptFirstMom}
\EE\cH^d(Z_o) \sim \frac{\pi^{{d+1\over 2}}\,\Gamma(\frac{d+1}{2})}{2^{d-2}\,\Gamma(\frac{d}{2})^2} \left( \gamma - \gamma_c^{(d)} \right)^{-1}. 
\end{equation}
as $\gamma\downarrow\gamma_c^{(d)}$. Here, we extend this result to the second moment.

\begin{corollary}[Critical divergence]\label{cor:GammaToCritical_d}
	As the intensity approaches the critical value from the supercritical side, i.e.\ $\gamma \downarrow \gamma_c^{(d)}$, the second moment of the volume of the zero cell diverges according to
	\[
	\EE\cH^d(Z_o)^2 \sim K_d \left( \gamma - \gamma_c^{(d)} \right)^{-3},\qquad K_d = \frac{\pi^{d+1}}{2^{d-2}} (d-1) \frac{\Gamma\left(\frac{d+1}{2}\right)^2}{\Gamma\left(\frac{d}{2}\right)^4}.
	\]
\end{corollary}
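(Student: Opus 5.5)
We derive Corollary \ref{cor:GammaToCritical_d} from the exact formula of Theorem \ref{thm:supercritical}, studying the Mellin--Barnes integral as $a\downarrow 0$. Indeed $\gamma=\gamma_c^{(d)}$ corresponds exactly to $a=0$, and
\[
a=\frac{1}{4}\,\frac{\Gamma(\frac d2)}{\sqrt\pi\,\Gamma(\frac{d+1}2)}\bigl(\gamma-\gamma_c^{(d)}\bigr).
\]
We therefore aim to show $G\sim \kappa_d\, a^{-3}$ for an explicit constant $\kappa_d$. Combined with the prefactor $\frac{\gamma^3\pi^{d-2}}{32}\Gamma(\frac d2)^2$ evaluated at $\gamma=\gamma_c^{(d)}$, this gives the $(\gamma-\gamma_c^{(d)})^{-3}$ law, and the constant should reduce to $K_d$ via \eqref{eq:CriticalIntensity}.

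\textbf{Singular part.} In the integrand of the $G$-function, the parameters produce the factor $\Gamma(a+s)^3\Gamma(a-s)^3$, from $b_1=b_2=b_3=a$ in the numerator and $a_1=a_2=a_3=1-a$ as $\Gamma(1-a_j-s)$. The separating contour must pass between the poles at $s=-a$ and $s=a$. As $a\downarrow0$ these two triple poles pinch the contour at $s=0$, and this pinching is the only source of divergence. The remaining Gamma factors involving $\pm a$ are $\Gamma(\frac{1-d}2-a\ldots)$-type terms, and they stay regular near $s=0$ when $a=0$. The parameters $\frac{d\pm1}{4}$, $\frac{5-d}{4}$, etc.\ give the hyperbolic Plancherel density. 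Hence I would:
\begin{enumerate}
\item Shift the contour past $s=-a$, or equivalently write it as the residue at $s=-a$ (with the appropriate orientation) plus an integral over a fixed contour, say $\re s=-\delta$ with $\delta$ independent of $a$. Alternatively, use the rational form of Corollary \ref{cor:odd_dimensions} in odd dimensions as a consistency check.
\item Show that the fixed-contour integral stays bounded as $a\to0$. This holds by dominated convergence, since the integrand is uniformly integrable away from the pinch; decay at $\pm i\infty$ follows from Stirling's formula, the balance of parameters being the same one that guarantees convergence at $z=1$.
\item Compute the residue at the triple pole $s=-a$. Writing the integrand as $(s+a)^{-3}\Gamma(1+a+s)^3\Gamma(a-s)^3 H(s)$, the residue is $\frac12\frac{d^2}{ds^2}$ of the regular part at $s=-a$. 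The term $\Gamma(a-s)^3|_{s=-a}=\Gamma(2a)^3\sim(2a)^{-3}$ dominates, while derivatives of $\Gamma(a-s)^3$ produce only lower orders $a^{-4}\cdot a$, etc.
\end{enumerate}

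\textbf{Main obstacle.} Step 3 is delicate: naive differentiation gives terms like $\Gamma''(2a)$, which are of order $a^{-3}\cdot$ larger powers. So I would instead rescale $s=a t$ and compute the pinched integral directly:
\[
\frac{1}{2\pi i}\int \Gamma(a+s)^3\Gamma(a-s)^3 H(s)\,ds \approx H(0)\,a^{-5}\frac{1}{2\pi i}\int\frac{dt}{(1+t)^3(1-t)^3}\cdot a.
\]
The net order depends on whether $H(0)$ vanishes. It should vanish: the Plancherel density $|c(\lambda)|^{-2}\sim\lambda^2$ near $0$, visible through the factors $\Gamma(s)/\Gamma(\tfrac12+s)$-type combinations from parameters $0,1,\frac12$, gives $H(s)\sim h_d s^2$. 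The integral is then
\[
\approx h_d\, a^{-3}\frac{1}{2\pi i}\int_{i\RR}\frac{t^2\,dt}{(1-t^2)^3},
\]
an elementary residue equal to a rational number ($1/16$). This is exactly the $a^{-3}$ scaling, so the key step is correctly identifying the quadratic vanishing of $H$ at $0$ and its coefficient $h_d$, using the duplication formula on the parameter pairs $(\frac{d-1}4,\frac{d+1}4)$ and $(\frac{3-d}4,\frac{5-d}4)$. Finally, I would assemble $h_d/16$ with the prefactor and the conversion $a\leftrightarrow\gamma-\gamma_c^{(d)}$, and simplify with the Legendre duplication formula to match $K_d$. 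As a check, the $d=3$ formula gives $2048\pi^2\cdot 15/(16\cdot 8^3(\gamma-4)^3)=\frac{15\pi^2}{4}(\gamma-4)^{-3}$, which should equal $K_3=\frac{\pi^4}{2}\cdot2\cdot\frac{1}{(\pi/4)^2}\cdot\ldots$, to be verified.
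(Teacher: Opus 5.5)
Your final route is the paper's proof written in the Mellin--Barnes variable. You rescale $s=at$ at the pinch and use the double zero of $1/(\Gamma(s)\Gamma(-s))=-s\sin(\pi s)/\pi$ at $s=0$. The paper instead substitutes $\lambda=2au$ in the real-line form of the integral, which is your substitution with $t=iu$. It then uses $P_d(\lambda)\sim\Gamma(\frac{d-1}{2})^2\lambda^2$, bounds the tail $\lambda\ge 1$ separately, and concludes by dominated convergence with $\int_0^\infty u^2(1+u^2)^{-3}\,du=\pi/16$; assembling the constants your way does give $K_d$. There are three slips to fix:
\begin{itemize}
\item The rescaled integral is $\frac{1}{2\pi i}\int_{i\RR}t^2(1-t^2)^{-3}\,dt=-\frac{1}{16}$, not $\frac1{16}$. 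The sign is compensated by $h_d=-2^{3-d}\Gamma(\frac{d-1}{2})^2\Gamma(\frac{d+1}{2})^{-6}<0$.
\item In Step~3, the dominance claim is false. In the second derivative of the regular part at $s=-a$, the three terms where both derivatives fall on $\Gamma(a-s)^3H(s)$ are all of order $a^{-3}$. They sum to $-h_d/(16a^3)$, so the residue route also works and agrees with the rescaling.
\item In the $d=3$ check, $7\gamma^2-48$ must be evaluated at $\gamma=4$, not $\gamma=3$. This gives $16\pi^2=K_3$, not $\frac{15\pi^2}{4}$.
\end{itemize}
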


The asymptotics \eqref{eq:AsymptFirstMom} together with Corollary \ref{cor:GammaToCritical_d} show that the first two volume-moment exponents of the zero cell coincide with the classical mean-field exponents from percolation theory. Viewing the zero cell $Z_o$ as the analogue of the cluster at the origin, the quantities $\EE\cH^d(Z_o)$ and $\EE\cH^d(Z_o)^2$ correspond to the first and second moment of the cluster size. In mean-field models, such as percolation on a Cayley tree (or Bethe lattice), this first and second moment are known to diverge with integer exponents $1$ and $3$, respectively. In fact, this is a direct consequence of the cluster size distribution derived around \cite[Equation (10.9)]{Grimmett}. In contrast, low-dimensional Euclidean percolation models exhibit fractional exponents, such as the susceptibility exponent $43/18$ in the planar case \cite[Table 10.1]{Grimmett}. The appearance of the mean-field exponents $1$ and $3$ in \eqref{eq:AsymptFirstMom} and Corollary \ref{cor:GammaToCritical_d} reflect the exponential growth of the hyperbolic space, which asymptotically suppresses spatial correlations in a manner analogous to tree-like structures.

\smallskip

Our paper is organized as follows. Section \ref{sec:preliminaries} provides the necessary background on hyperbolic geometry and Poisson hyperplane tessellations. In that section, we also restate the facts from harmonic analysis in hyperbolic space which are essential to our computation. In Section \ref{sec:CriticalProof}, we prove the cubic divergence of the second moment in the critical case (Theorem \ref{thm:critical}). Finally, Section \ref{sec:2ndMomentExact} derives the exact formula for the supercritical second moment (Theorem \ref{thm:supercritical} and Corollary \ref{cor:odd_dimensions}) together with its asymptotic behaviour (Corollaries \ref{cor:GammaToInfinity_d} and \ref{cor:GammaToCritical_d}).

\section{Preliminaries}\label{sec:preliminaries}

\subsection{Hyperbolic space}

In this paper we work with the hyperbolic space $\mathbb{H}^d$ of dimension $d \ge 2$, which is the unique simply connected, complete, $d$-dimensional Riemannian manifold with constant sectional curvature $-1$. We denote by $d_h(\,\cdot\,,\,\cdot\,)$ the geodesic distance metric on $\mathbb{H}^d$. For a fixed origin $o \in \mathbb{H}^d$ and a radius $R>0$, let $B_R \coloneqq \{x \in \mathbb{H}^d : d_h(o,x) \le R\}$ be the closed hyperbolic ball centered at $o$.

The Riemannian volume measure on $\mathbb{H}^d$ is denoted by $\cH^d$. It is the $d$-dimensional Hausdorff measure on $\HH^d$ induced by the metric $d_h$. In our calculations, we frequently rely on hyperbolic polar coordinates. Any point $x \in \mathbb{H}^d \setminus \{o\}$ can be uniquely represented as $x=(r, u)$, where $r = d_h(o,x) > 0$ is the radial distance from the origin and $u \in \mathbb{S}^{d-1}$ is a direction vector on the unit sphere (in the tangent space at $o$). In these coordinates, the hyperbolic volume measure takes the form
\begin{equation}\label{eq:PolarCoordinates}
\cH^d(dx) = \sinh^{d-1}(r) \, dr \, d\sigma_{d-1}(u),
\end{equation}
where $\sigma_{d-1}$ denotes the spherical Lebesgue measure on $\mathbb{S}^{d-1}$, normalized such that
$$
\omega_d\coloneqq\sigma_{d-1}(\SS^{d-1})={2\pi^{d/2}\over\Gamma(d/2)}.
$$

Let $A_h(d, d-1)$ denote the space of hyperplanes in $\mathbb{H}^d$. Each element $H \in A_h(d, d-1)$ is a complete totally geodesic hypersurface in $\mathbb{H}^d$, which divides the space into two open half-spaces. We equip the space $A_h(d, d-1)$ with the measure $\mu$ that is invariant under the action of the group of isometries of $\mathbb{H}^d$. This measure is unique up to a normalization factor. We choose to normalize $\mu$ such that for any geodesic segment $S \subset \mathbb{H}^d$ of length $r>0$, the measure of the set of hyperplanes intersecting $S$ is given by
\begin{equation*}
	\mu(\{H \in A_h(d, d-1) : H \cap S \neq \emptyset\}) = \frac{1}{\sqrt{\pi}} \frac{\Gamma(\frac{d}{2})}{\Gamma(\frac{d+1}{2})}\, r.
\end{equation*}
We remark that our choice for the normalization of $\mu$ is consistent with the one used earlier in \cite{BuehlerHugThaeleBM,GodlandKabluchkoThaeleBetaStar,HeroldHugThaele,HS}. Using the additivity of the measure $\mu$ and the fact that, for almost every hyperplane the intersection with the boundary of a hyperbolic triangle \(T\) consists of precisely two points, this normalization implies that for a hyperbolic triangle \(T\) with side lengths \(a,b,c\), the measure of the set of hyperplanes intersecting \(T\) is
\begin{equation}\label{eq:CroftonTriangle}
\mu(\{H \in A_h(d, d-1) : H \cap T \neq \emptyset\}) = \frac{1}{2\sqrt{\pi}} \frac{\Gamma(\frac{d}{2})}{\Gamma(\frac{d+1}{2})} (a+b+c).    
\end{equation}
Further background material on hyperbolic geometry can be found in the monograph \cite{Ratcliffe}.

\subsection{Harmonic analysis on hyperbolic space}

To evaluate the integral arising in the computation of the second volume moment and to show that it is finite, we interpret it within the framework of harmonic analysis on the hyperbolic space $\HH^d$, for which we refer to \cite{Faraut,Helgason,Koornwinder,Strichartz,Wolf}. Because the results we need are distributed over several sources and formulated with differing conventions and normalizations, we briefly review the necessary material. Among these references, the lecture notes \cite{Faraut} of Faraut is perhaps the closest to our present needs, but they are written in French and not widely available. The classical monograph of Helgason \cite{Helgason} gives a detailed treatment in dimension two, but works with curvature $-4$ instead of the standard curvature $-1$. The higher-dimensional case can then be obtained from the general group-theoretic results developed there. Koornwinder's survey \cite{Koornwinder} is also very helpful, but uses a different normalization of the radial measure, and similar discrepancies in normalization appear also in the paper of Strichartz \cite{Strichartz} and the monograph of Wolf \cite{Wolf}. Strichartz gives a representation of the spherical functions in terms of associated Legendre functions, which is particularly useful to us as it allows us to explicitly calculate the spherical transform of $x \mapsto e^{-c \, d_h(o,x)}$, $c>0$, using integral identities available in the literature. For the convenience of the reader, we therefore compile the relevant facts here and indicate how they must be adjusted to fit our framework.

We identify $\HH^d$ with the symmetric space $G/K$, where $G = SO_0(d,1)$ acts transitively on $\HH^d$ by orientation preserving isometries and $K = SO(d)$ is the stabilizer of the origin $o \in \HH^d$. Here $O(d,1)$ is the generalized Lorentz group, which consists of linear transformations that preserve the quadratic form $(x_1,\ldots,x_{d+1})\mapsto x_1^2+\ldots+x_d^2-x_{d+1}^2$ with signature $(d,1)$ and $SO_0(d,1)$ is the connected component of the identity of that group. In this identification, points $x, y \in \HH^d$ are represented by cosets $gK, hK$ in $G$, and the hyperbolic distance $d_h(x,y)$ is invariant under the diagonal action of $G$, satisfying $d_h(gx, gy) = d_h(x,y)$ for all $g \in G$.

We call a function $f \colon \HH^d \to \RR$ \emph{radial} when $f(x)$ depends only on the distance of $x$ to the origin $o$.
Note that this is equivalent to $f$ being $K$-invariant, i.e., $f(k\cdot x) = f(x)$ for all $x \in \HH^d$, $k\in K$.
For a radial function $f$, we will sometimes write, in a slight abuse of notation,  $f(r) \coloneqq f(x)$ when $x\in\HH^d$ satisfies $d_h(o,x)=r\geq 0$.
For $x,y \in \HH^d$, we introduce the notation
\begin{equation}\label{eq:def_inverse}
    f(x^{-1}y) \coloneqq f(g_x^{-1}g_y \cdot o),
\end{equation}
where $g_x,g_y \in G$ satisfy $g_x \cdot o = x$, $g_y \cdot o = y$.
Note that the definition does not depend on the choice of $g_x,g_y$, as any other choice $\tilde g_x = g_x k_1$, $\tilde g_y = g_y k_2$ leads to the same value
\[ f(\tilde g_x^{-1} \tilde g_y^{-1} \cdot o) 
= f(k_1^{-1} g_x^{-1} g_y k_2 \cdot o) 
= f(k_1^{-1} g_x^{-1} g_y \cdot o)
= f(g_x^{-1} g_y \cdot o), \]
where the $K$-invariance of $f$ enters in the last equation.
The convolution of two radial functions $f,g$ is defined by
\[ (f*g )(x) \coloneqq \int_{\HH^d} f(y^{-1}x) g(y)\, \cH^{d}(dy), \qquad x \in \HH^d,\]
whenever this integral exists for almost all $x$.
In particular, this includes the case where $f \in L^1(\HH^d)$ and $g \in L^p(\HH^d)$ for some $p \in [1,\infty]$, as well as the case $f,g \in L^2(\HH^d)$. Strictly speaking, one does not need to assume that both functions are radial. However, since only the radial case is relevant for our purposes, we restrict to this setting throughout.

A deep result of non-Euclidean harmonic analysis is the \emph{Kunze--Stein phenomenon} (see \cite{Cowling78}), which has the following consequence in hyperbolic space.

\begin{proposition}\label{prop:kunze_stein}
    Let $1\leq p < 2$ and let $f \in L^p(\HH^d)$ and $g \in L^2(\HH^d)$ be radial functions.
    Then $f * g \in L^2(\HH^d)$.
\end{proposition}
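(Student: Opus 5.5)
The plan is to prove the statement directly on the spectral side, using the spherical Fourier transform and the Plancherel theorem for radial functions on $\HH^d=G/K$. Citing Cowling's theorem \cite{Cowling78} for $G=SO_0(d,1)$ would also work, and I describe that route at the end as a fallback. Let $\rho=\frac{d-1}{2}$ and let $\varphi_\lambda$, $\lambda\in\RR$, be the elementary spherical functions. For a radial $f$ the spherical transform is $\hat f(\lambda)=\int_{\HH^d} f(x)\varphi_\lambda(x)\,\cH^d(dx)$. I will use three standard facts from the references quoted above.
\begin{enumerate}
\item Plancherel: $\|g\|_2^2=\int_0^\infty|\hat g(\lambda)|^2\,|c(\lambda)|^{-2}\,d\lambda$, up to a normalising constant, for radial $g\in L^2$.
\item Convolution: $\widehat{f*g}=\hat f\,\hat g$, for instance when $f,g$ are radial, bounded and compactly supported.
\item Domination: $|\varphi_\lambda|\le\varphi_0$ for real $\lambda$, and $\varphi_0(r)\asymp(1+r)e^{-\rho r}$.
\end{enumerate}

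The key estimate is that $\hat f$ is uniformly bounded on $\RR$ when $f\in L^p$ with $1\le p<2$. Let $p'$ be the conjugate exponent, so $p'>2$. By H\"older's inequality,
\[
|\hat f(\lambda)|\le\|f\|_p\,\|\varphi_\lambda\|_{p'}\le\|f\|_p\,\|\varphi_0\|_{p'} .
\]
Using the polar coordinates \eqref{eq:PolarCoordinates}, $\|\varphi_0\|_{p'}^{p'}$ is comparable to $\int_0^\infty(1+r)^{p'}e^{-p'\rho r}e^{2\rho r}\,dr$. This integral is finite precisely because $p'\rho>2\rho$, that is, $p'>2$. Hence $\sup_\lambda|\hat f(\lambda)|\le C_p\|f\|_p$ with $C_p=\|\varphi_0\|_{p'}<\infty$. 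Combining this with facts 1 and 2 gives, for nice radial $f$ and $g$,
\[
\|f*g\|_2^2=\int_0^\infty|\hat f(\lambda)|^2|\hat g(\lambda)|^2|c(\lambda)|^{-2}\,d\lambda\le C_p^2\|f\|_p^2\|g\|_2^2 .
\]

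To pass to general $f\in L^p$ and $g\in L^2$, I will approximate both by radial bounded compactly supported functions $f_n\to f$ in $L^p$ and $g_n\to g$ in $L^2$; truncation preserves radiality. The estimate above is bilinear, so $f_n*g_n$ is Cauchy in $L^2$ and converges to some limit $h\in L^2$. It remains to identify $h$ with the pointwise-defined convolution $f*g$, and I expect this to be the main obstacle. For this I would first apply the argument to $|f|$ and $|g|$, after truncating them to bounded compactly supported approximants $\phi_m\uparrow|f|$, $\psi_m\uparrow|g|$, and use monotone convergence: $\||f|*|g|\|_2\le C_p\|f\|_p\|g\|_2<\infty$. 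In particular, the integral defining $(f*g)(x)$ converges absolutely for almost every $x$. Dominated convergence along a subsequence then identifies the $L^2$ limit $h$ with $f*g$ almost everywhere.

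The remaining work is a normalisation check: the $c$-function and the spherical functions in the cited sources must be matched to the measure $\cH^d$. Only the finiteness of $\|\varphi_0\|_{p'}$ matters here, not the exact constants, so this check is not delicate. If the spectral route becomes cumbersome, the fallback is to lift $f,g$ to $K$-bi-invariant functions $F(h)=f(h\cdot o)$ on $G$, with Haar measure normalised so that $\int_G F=\int_{\HH^d}f\,d\cH^d$. Then $\|F\|_{L^p(G)}=\|f\|_p$, and the lift of $f*g$ is the group convolution $F*_G G_g$, because $F(h^{-1}g)=f(y^{-1}x)$ with $y=h\cdot o$ and $x=g\cdot o$. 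Cowling's inequality $\|F*_GG_g\|_{L^2(G)}\le C_p\|F\|_p\|G_g\|_2$ then gives the claim at once.
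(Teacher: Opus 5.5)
Your argument is correct, but it is not the paper's route. The paper gives no proof of its own: it invokes Cowling's general Kunze--Stein theorem on $SO_0(d,1)$, specialised to $SO(d)$-invariant functions. That is essentially your fallback, with the lift to $G$ and the Haar normalisation left implicit.

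Your main argument instead uses radiality in an essential way. On radial functions, convolution with $f$ is diagonalised by the spherical transform. The whole statement therefore reduces to $\sup_{\lambda}|\widehat f(\lambda)|\le\|\phi_0\|_{p'}\|f\|_p$, i.e.\ to the fact that the tempered spherical functions lie in $L^{p'}(\HH^d)$ for $p'>2$.

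What your route buys:
\begin{itemize}
\item It is self-contained, using only tools the paper already sets up: the Plancherel identity \eqref{eq:Plancherel}, the $L^1$ convolution identity \eqref{eq:convolution_identity}, and the bound $\phi_0(r)\le C(1+r)e^{-(d-1)r/2}$ quoted in the proof of Lemma \ref{lem:SphTrafo}. Only this upper bound is needed, not the two-sided asymptotics.
\item It yields the explicit estimate $\|f*g\|_2\le\|\phi_0\|_{p'}\|f\|_p\|g\|_2$.
\end{itemize}
The paper's citation buys full generality, with no radiality needed for either factor, at the cost of appealing to a deep theorem.

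Your identification of the $L^2$ limit $h$ with $f*g$ is more than the statement requires. The monotone-convergence bound on $|f|*|g|$, together with $|f*g|\le|f|*|g|$, already gives $f*g\in L^2(\HH^d)$. Still, the same approximation scheme has a payoff. Combined with $\|\widehat{f_n}-\widehat f\|_\infty\le\|\phi_0\|_{p'}\|f_n-f\|_p$, it gives $\widehat{f*g}=\widehat f\,\widehat g$ in $L^2(\RR_+,\nu)$ for functions of arbitrary sign and without assuming $f\in L^2$. So it also covers Lemma \ref{lem:convolution} and the remark following it.
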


Note that this is in stark contrast with $\RR^d$, where for $p \in (1,2)$ the convolution of an $L^p$-function and an $L^2$-function need not be an $L^2$-function in general.
We remark that the main result of \cite{Cowling78} is in fact much more general, the above proposition follows as a corollary when specialising to $SO(d)$-invariant functions on $SO_0(d,1)$.

\medspace

The \emph{spherical transform} of a radial function $f\in L^1(\HH^d)$ is defined by the integral
\[
\widehat f(\lambda) \coloneqq \int_{\HH^d} f(x)\,\phi_{-\lambda}(x)\,\cH^d(dx),\qquad \lambda\in\RR.
\]
Here, the elementary spherical function $\phi_\lambda$ is the radial eigenfunction of the Laplace--Beltrami operator on $\HH^d$ corresponding to the eigenvalue $-(d-1)^2/4-\lambda^2$, normalized in such a way that $\phi_\lambda(o)=1$, see \cite[Equation (1.2')]{Strichartz} or \cite[page 357]{Faraut}. In the notation of \cite{Koornwinder}, this is precisely the Jacobi function $\phi_\lambda^{(\alpha,\beta)}$ with $(\alpha,\beta)=({d\over 2}-1,-{1\over 2})$,
since for this choice the differential operator $L_{\alpha,\beta}$ in \cite{Koornwinder} coincides with the radial part of the Laplace--Beltrami operator on $\HH^d$. We use the normalization $\phi_\lambda(o)=1$, so compared to \cite[Equations (4.3) and (4.5)]{Strichartz} we divide out the factor $\omega_d\gamma_d(\lambda)$ appearing in the spectral projection kernel of \cite{Strichartz}. With this normalization, \cite[Equation (4.5)]{Strichartz} yields the representation
\[
\phi_\lambda(x)
=K(d)\,\sinh^{\frac{2-d}{2}}(d_h(o,x))
P_{-\frac12+i\lambda}^{\frac{2-d}{2}}\!\bigl(\cosh(d_h(o,x))\bigr),
\]
where $K(d)=2^{\frac{d-2}{2}}\Gamma\!\left(\frac d2\right)$, $i=\sqrt{-1}$ is the imaginary unit and $P_b^a$ are the Legendre functions of the first kind. The value of $K(d)$ is chosen so that $\phi_\lambda(o)=1$, using
\[
\lim_{r\to 0+}\sinh^a(r)\,P_b^a(\cosh(r)) = \frac{2^a}{\Gamma(1-a)}
\]
for $a\in\RR\setminus\NN$ and $b\in\CC$, which in turn follows from \cite[Formula 14.8.7]{NIST} and the fact that $\sinh^2(r)=(\cosh(r)+1)(\cosh(r)-1)$. Since $\phi_{-\lambda}=\phi_\lambda$ for $\lambda\in\RR$, we can switch between both functions in the explicit calculations below. Representations of $\phi_\lambda$ in terms of the Gauss hypergeometric function can be found, for example, in \cite{Koornwinder,Strichartz,Wolf}.

By Plancherel's theorem, the spherical transform admits a unique isometric extension from the radial subspace of $L^1(\HH^d)\cap L^2(\HH^d)$ to the radial subspace of $L^2(\HH^d)$, mapping it onto $L^2(\RR_+,\nu)$, where
\[
d\nu(\lambda)=|c(\lambda)|^{-2}\,d\lambda
\]
is the Plancherel measure associated with the Harish--Chandra $c$-function, see \cite[Theoreme IV.2]{Faraut}.
In particular, for radial $f,g\in L^2(\HH^d)$ one has
\begin{equation}\label{eq:Plancherel}
	\langle f,g\rangle_{L^2(\HH^d)}
	=
	\int_0^\infty \widehat f(\lambda)\,\overline{\widehat g(\lambda)}\,|c(\lambda)|^{-2}\,d\lambda.
\end{equation}

We now explain how to extract an explicit representation of \(c(\lambda)\) from \cite{Koornwinder}.
Let \((\alpha,\beta)=\left(\frac{d}{2}-1,-\frac{1}{2}\right)\) be as above, and let \(\phi^{(\alpha,\beta)}_\lambda\), \(\lambda\geq 0\), denote the corresponding Jacobi function defined in \cite[Equation (2.4)]{Koornwinder}.
Recall that, for \(\lambda\geq 0\), the function \(\phi^{(\alpha,\beta)}_\lambda\) agrees with the spherical function \(\phi_\lambda\) introduced above, in the sense that
\[
\phi_\lambda(x)=\phi^{(\alpha,\beta)}_\lambda(d_h(o,x)),
\qquad x\in\HH^d.
\]
Moreover, for this choice of \(\alpha\) and \(\beta\), the function \(\Delta=\Delta_{\alpha,\beta}\) from \cite[Equation (2.5)]{Koornwinder} is given by
\[
\Delta(t)=2^{d-1}\sinh^{d-1}(t),
\qquad t>0.
\]
Finally, for our fixed choice of \(\alpha\) and \(\beta\), we write \(\widetilde F\) for the Jacobi transform of an even function \(F\in C_c^\infty(\RR)\) as introduced in \cite[Equation (2.12)]{Koornwinder}. Here \(C_c^\infty(\RR)\) denotes the space of infinitely differentiable functions on \(\RR\) with compact support.

Now let \(F,G\) be even functions in \(C_c^\infty(\RR)\), and let \(f,g\) be the associated radial functions on \(\HH^d\) defined by
\[
f(x)\coloneqq F(d_h(o,x)),
\qquad
g(x)\coloneqq G(d_h(o,x)).
\]
Passing to polar coordinates using \eqref{eq:PolarCoordinates} and recalling that \(\phi_{-\lambda}=\phi_\lambda\), we obtain for \(\lambda\geq 0\),
\[
\widehat f(\lambda)
= \omega_d \int_0^\infty F(r)\phi_\lambda^{(\alpha,\beta)}(r)\sinh^{d-1}(r)\,dr
= \frac{\omega_d}{2^{d-1}} \int_0^\infty F(r)\phi_\lambda^{(\alpha,\beta)}(r)\Delta(r)\,dr
= \frac{\omega_d}{2^{d-1}}\,\widetilde F(\lambda),
\]
and similarly $\widehat g(\lambda)=\frac{\omega_d}{2^{d-1}}\,\widetilde G(\lambda)$. Moreover, another application of polar coordinates yields
\[
\langle f,g\rangle_{L^2(\HH^d)}
= \omega_d \int_0^\infty F(r)G(r)\sinh^{d-1}(r)\,dr
= \frac{\omega_d}{2^{d-1}} \int_0^\infty F(r)G(r)\Delta(r)\,dr.
\]
By \cite[Theorem 2.4]{Koornwinder} (note that the set \(D_{\alpha,\beta}\) appearing there in the definition of \(\nu\) is empty), this is equal to
\[
\frac{\omega_d}{2^{d-1}}
\int_0^\infty
\widetilde F(\lambda)\widetilde G(\lambda)\,
\frac{|c_{\alpha,\beta}(\lambda)|^{-2}}{2\pi}\,d\lambda
=
\frac{2^{d-1}}{\omega_d}
\int_0^\infty
\widehat f(\lambda)\widehat g(\lambda)\,
\frac{|c_{\alpha,\beta}(\lambda)|^{-2}}{2\pi}\,d\lambda,
\]
where, by \cite[Equation (2.18)]{Koornwinder},
\[
c_{\alpha,\beta}(\lambda)
=
\frac{2^{(d-1)/2-i\lambda}\Gamma(\frac{d}{2})\Gamma(i\lambda)}
{\Gamma\!\left(\frac{i\lambda+(d-1)/2}{2}\right)
\Gamma\!\left(\frac{i\lambda+(d-1)/2+1}{2}\right)}.
\]

We now apply Legendre's duplication formula
\begin{equation}\label{eq:Legendre}
\Gamma(z)\Gamma\!\left(z+\frac12\right)
=
2^{1-2z}\sqrt{\pi}\,\Gamma(2z),
\qquad z\in\CC,
\end{equation}
with $z=\frac{i\lambda+(d-1)/2}{2}$.
This gives
\[
\Gamma\!\left(\frac{i\lambda+(d-1)/2}{2}\right)
\Gamma\!\left(\frac{i\lambda+(d-1)/2+1}{2}\right)
=
2^{1-i\lambda-(d-1)/2}\sqrt{\pi}\,
\Gamma\!\left(i\lambda+\frac{d-1}{2}\right),
\]
and therefore
\[
c_{\alpha,\beta}(\lambda)
=
\frac{2^{(d-1)/2-i\lambda}\Gamma(\frac{d}{2})\Gamma(i\lambda)}
{2^{1-i\lambda-(d-1)/2}\sqrt{\pi}\,\Gamma(i\lambda+(d-1)/2)}
=
\frac{2^{d-2}\Gamma(\frac{d}{2})}{\sqrt{\pi}}\,
\frac{\Gamma(i\lambda)}{\Gamma\!\left(\frac{d-1}{2}+i\lambda\right)},
\]
see also the formula on the bottom of page 357 in \cite{Faraut} and take $s=i\lambda$ there. Collecting the constants, we conclude that \eqref{eq:Plancherel} holds with
\begin{align}\label{eq:Cfunction}
c(\lambda)
=
\frac{2^{1-d}\pi^{-d/2}}{\Gamma(d/2)}
\left|
\frac{\Gamma\!\left(\frac{d-1}{2}+i\lambda\right)}{\Gamma(i\lambda)}
\right|^2.
\end{align}

Moreover, the convolution identity
\begin{equation}\label{eq:convolution_identity}
    \widehat{(f*g)} = \widehat{f} \cdot \widehat{g}.   
\end{equation}
holds for radial functions $f,g \in L^1(\HH^d)$, cf.\ \cite[page 411]{Faraut} or \cite[page 43]{Helgason}.
Together with the Kunze--Stein phenomenon described in Proposition \ref{prop:kunze_stein}, we get the following result.

\begin{lemma}\label{lem:convolution}
Let $1\leq p<2$. Let $f\in L^p(\HH^d)\cap L^2(\HH^d)$ and $g\in L^2(\HH^d)$ be non-negative radial functions.
Then
\[
\widehat{f*g}=\widehat f \cdot \widehat g
\qquad\text{in }L^2(\RR_+,\nu),
\]
that is, the identity holds for $\nu$-almost every $\lambda\in\RR_+$.
\end{lemma}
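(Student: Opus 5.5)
We prove Lemma \ref{lem:convolution} by approximation, using the $L^1$ identity \eqref{eq:convolution_identity} for nice truncations and passing to the limit in $L^2(\RR_+,\nu)$ via the Plancherel isometry.

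\textbf{Step 1 (well-definedness).} Since $f\in L^p(\HH^d)$ with $p<2$ and $g\in L^2(\HH^d)$ are radial, Proposition \ref{prop:kunze_stein} gives $f*g\in L^2(\HH^d)$. Hence $\widehat{f*g}$ is defined as an element of $L^2(\RR_+,\nu)$ by the Plancherel extension. The right-hand side also makes sense: $f,g\in L^2$, so $\widehat f$ and $\widehat g$ are defined $\nu$-a.e.

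\textbf{Step 2 (truncation).} Let $g_n\coloneqq \min(g,n)\mathds 1_{B_n}$. These are non-negative radial functions in $L^1\cap L^2$ with $g_n\uparrow g$ pointwise, so $g_n\to g$ in $L^2$ by monotone convergence. Likewise set $f_n\coloneqq \min(f,n)\mathds 1_{B_n}$; then $f_n\in L^1\cap L^2$ and $f_n\to f$ both in $L^p$ and in $L^2$. By \eqref{eq:convolution_identity},
\[
\widehat{f_n*g_n}=\widehat{f_n}\,\widehat{g_n}
\]
holds pointwise. Moreover $f_n*g_n\in L^1\cap L^2$, because it is an $L^1*L^1$ convolution of bounded, compactly supported functions. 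So on this function the $L^1$ transform and the $L^2$ Plancherel transform coincide.

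\textbf{Step 3 (limit on the left).} Positivity gives $0\le f_n*g_n\uparrow f*g$ pointwise by monotone convergence. Since the limit $f*g$ lies in $L^2$ by Step 1, dominated convergence in $L^2$ (dominated by $f*g$) yields $f_n*g_n\to f*g$ in $L^2(\HH^d)$. The Plancherel isometry then gives $\widehat{f_n*g_n}\to\widehat{f*g}$ in $L^2(\nu)$.

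\textbf{Step 4 (limit on the right).} We need $\widehat{f_n}\widehat{g_n}\to\widehat f\,\widehat g$ in a sense that identifies limits, for instance $\nu$-a.e. along a subsequence. Plancherel gives $\widehat{f_n}\to\widehat f$ and $\widehat{g_n}\to\widehat g$ in $L^2(\nu)$. Passing to a subsequence, both converge $\nu$-a.e., so the products converge $\nu$-a.e. to $\widehat f\widehat g$. Along a further subsequence, $\widehat{f_n*g_n}\to\widehat{f*g}$ $\nu$-a.e. by Step 3. Combining these with the identity of Step 2 gives $\widehat{f*g}=\widehat f\,\widehat g$ $\nu$-a.e.

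The main obstacle is Step 3: Kunze--Stein alone gives only boundedness of the bilinear map $L^p\times L^2\to L^2$. Non-negativity is used precisely so that monotone and dominated convergence give $L^2$ convergence of the truncated convolutions. Alternatively, the Kunze--Stein estimate $\|f*g\|_2\le C\|f\|_p\|g\|_2$, assuming the quantitative form from \cite{Cowling78}, yields this convergence directly via bilinearity. No separate integrability of $\widehat f\,\widehat g$ needs checking, since it is identified a.e. with an $L^2(\nu)$ function.
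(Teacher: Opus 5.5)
Your proof is correct and follows essentially the same argument as the paper. Both truncate to the ball, apply the $L^1$ convolution identity, get $f_n*g_n\uparrow f*g$ by monotone convergence, use domination by $f*g\in L^2$ (Kunze--Stein) for $L^2$ convergence, and finish with a common $\nu$-a.e.\ subsequence. Your extra height truncation $\min(\cdot,n)$ is harmless but unnecessary, since $f\mathbf 1_{B_R}$ and $g\mathbf 1_{B_R}$ already lie in $L^1\cap L^2$ because balls have finite volume; your explicit check that $f_n*g_n\in L^1\cap L^2$, so that the $L^1$ and Plancherel transforms agree, spells out a point the paper leaves implicit.
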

\begin{proof}
For $R>0$, define
\[
f_R\coloneqq f\,\mathbf 1_{B(o,R)},
\qquad
g_R\coloneqq g\,\mathbf 1_{B(o,R)}.
\]
Since $B(o,R)$ has finite hyperbolic volume and $f\in L^2(\HH^d)$, $g\in L^2(\HH^d)$, we have
\[
f_R,g_R\in L^1(\HH^d)\cap L^2(\HH^d).
\]
Moreover, $f_R$ and $g_R$ are radial. Hence the  convolution identity \eqref{eq:convolution_identity} for the spherical transform yields
\[
\widehat{f_R*g_R}=\widehat{f_R}\,\widehat{g_R}.
\]

Since $f_R\uparrow f$ and $g_R\uparrow g$ pointwise and all functions are non-negative, we have for a.e.\ $x\in\HH^d$,
\[
0\le (f_R*g_R)(x)\uparrow (f*g)(x)
\]
by the monotone convergence theorem. Because $f*g\in L^2(\HH^d)$ by Proposition \ref{prop:kunze_stein}, it follows that
\[
|f_R*g_R-f*g|^2\le |f*g|^2\in L^1(\HH^d).
\]
Therefore, by dominated convergence,
\[
f_R*g_R\to f*g \qquad\text{in }L^2(\HH^d).
\]

Also, since $f_R\to f$ and $g_R\to g$ in $L^2(\HH^d)$, the Plancherel theorem for the spherical transform implies
\[
\widehat{f_R*g_R}\to \widehat{f*g},\qquad
\widehat{f_R}\to \widehat f,\qquad
\widehat{g_R}\to \widehat g
\qquad\text{in }L^2(\RR_+,\nu).
\]
Passing to a common subsequence $R_n\to\infty$, we may assume that all three convergences hold pointwise for $\nu$-a.e.\ $\lambda\in\RR_+$. For such $\lambda$, we obtain
\[
\widehat{f*g}(\lambda)
=\lim_{n\to\infty}\widehat{f_{R_n}*g_{R_n}}(\lambda)
=\lim_{n\to\infty}\widehat{f_{R_n}}(\lambda)\,\widehat{g_{R_n}}(\lambda)
=\widehat f(\lambda)\,\widehat g(\lambda).
\]
This proves the claim.
\end{proof}

\begin{remark}
    The above identity also holds without the condition $f,g \geq 0$, however, the proof becomes more technical. Since our applications focus exclusively on non-negative radial functions, this case suffices for our purposes.
\end{remark}

\subsection{Hyperbolic Poisson hyperplane processes}

Let $\eta$ be a Poisson process on $A_h(d,d-1)$ with intensity measure $\Lambda=\gamma\mu$, where $\gamma>0$ is the intensity parameter. This means that for every measurable set $B\subseteq A_h(d,d-1)$, the random variable $\eta(B)$ is Poisson distributed with mean $\Lambda(B)=\gamma\mu(B)$, and that for pairwise disjoint measurable sets $B_1,\dots,B_n\subseteq A_h(d,d-1)$, the random variables $\eta(B_1),\dots,\eta(B_n)$ are independent.
The random collection of hyperplanes induced by $\eta$ partitions the hyperbolic space into a countable family of random convex polyhedra.
In this paper, we follow the terminology of \cite{Ratcliffe}, in particular Sections~6.3 and~6.5. Thus, a convex polyhedron is a nonempty, closed, convex subset of $\HH^d$ such that only finitely many of its facets intersect any bounded subset of $\HH^d$.
A polytope is a compact convex polyhedron, see \cite[Theorem~6.5.1]{Ratcliffe}.
Note that the cells induced by $\eta$ are not necessarily compact in general.
We call the random subdivision of $\HH^d$ induced by $\eta$ a hyperbolic Poisson hyperplane tessellation.

Our main object of study is the zero cell $Z_o$ of this tessellation, defined as the almost surely unique cell containing the previously fixed origin $o$. Formally, $Z_o$ is the intersection of all half-spaces defined by the hyperplanes of $\eta$ that contain $o$:
\[
Z_o \coloneqq \bigcap_{H \in \eta} H^{-},
\]
where $H^{-}$ denotes the closed half-space bounded by $H$ that contains $o$.

\section{Volume of the critical truncated zero cell: Proof of Theorem \ref{thm:critical}}\label{sec:CriticalProof}

Consider a Poisson hyperplane tessellation in $\HH^d$ with intensity $\gamma$. We denote the zero cell of this tessellation by $Z_o$ and investigate the second volume moment $\EE\cH^d(Z_o\cap B_R)^2$ of the truncated zero cell $Z_o \cap B_R$, where $B_R$ is a hyperbolic ball centred at $o$ with radius $R > 0$.
By Fubini's theorem, we have
\[
\EE\cH^d(Z_o\cap B_R)^2 = \int_{B_R}\int_{B_R}\PP(x,y\in Z_o)\,\cH^d(dx)\cH^d(dy).
\]
The event $\{x,y\in Z_o\}$ is equivalent to the event that the hyperbolic triangle $\Delta(o,x,y)$ with vertices $o$, $x$, $y$ is not hit by any of the hyperplanes of the Poisson hyperplane process.
This yields
\[ \PP(x,y\in Z_o) = e^{-\gamma\mu_{d-1}([\Delta(o,x,y)])}, \]
where we denote by $[\Delta(o,x,y)]=\{H\in A_h(d,d-1):H\cap \Delta(o,x,y)\neq\varnothing\}$ the set of hyperplanes having nonempty intersection with $\Delta(o,x,y)$.
By \eqref{eq:CroftonTriangle}, we have that
\[ \mu_{d-1}([\Delta(o,x,y)]) 
= c_d \big(d_h(o,x)+d_h(o,y)+d_h(x,y)\big) \]
with the constant
\begin{equation}\label{eq:def_cd}
c_d 
\coloneqq \frac{1}{2\sqrt{\pi}}  \frac{\Gamma(\frac{d}{2})}{\Gamma(\frac{d+1}{2})} 
= \frac{d-1}{2\gamma_c^{(d)}}.     
\end{equation}

We thus obtain
\begin{equation}\label{eq:TruncatedMomentIntegralRep}
\EE\cH^d(Z_o\cap B_R)^2 = \int_{B_R}\int_{B_R} e^{-\gamma\, c_d\,(d_h(o,x)+d_h(o,y)+d_h(x,y))}\,\cH^d(dx)\cH^d(dy).
\end{equation}
Let $\theta$ denote the angle of the hyperbolic triangle $\Delta(o,x,y)$ at $o$ and let $s\coloneqq d_h(o,x)$ and $t\coloneqq d_h(o,y)$ denote the two adjacent side lengths.
By the hyperbolic law of cosines \cite[Theorem 2.5.3]{Ratcliffe},
\[ \cosh d_h(x,y) = \cosh(s)\cosh(t)-\sinh(s)\sinh(t)\cos(\theta). \]
By applying the triangle- and reverse triangle inequality to $\Delta(o,x,y)$, we get the upper and lower bounds
\begin{equation}\label{eq:upper_lower_dist}
    |s-t| \leq  d_h(x,y) \leq s+t.
\end{equation}
Using hyperbolic polar coordinates as in \eqref{eq:PolarCoordinates}, the above identity and \eqref{eq:TruncatedMomentIntegralRep}, we obtain
\begin{align*}
\EE\cH^d(Z_o\cap B_R)^2
&= \int_0^R \int_0^R \int_{\SS^{d-1}} \int_{\SS^{d-1}} 
e^{-\gamma\, c_d \, (s+t+d(s,t,\theta(u,v)))}\\
&\hspace{4cm}\cH^{d-1}(du) \cH^{d-1}(dv)\, \sinh^{d-1}(s)\,ds\, \sinh^{d-1}(t)\,dt,    
\end{align*}
where we write $\theta(u,v)$ for the angle between $u$ and $v$ and use the notation
\[ d(s,t,\theta) \coloneqq \arcosh\big(\cosh(s)\cosh(t)-\sinh(s)\sinh(t)\cos(\theta)\big). \]
To simplify the above expression, note that for any $s,t > 0$ the innermost integral is independent of the choice of $v \in \SS^{d-1}$ and equals
\begin{align*}
\int_{\SS^{d-1}} e^{-\gamma\, c_d \, (s+t+d(s,t,\theta(u,v)))} \,\cH^{d-1}(du)
= \int_{\SS^{d-1}}e^{-\gamma\, c_d \, (s+t+d(s,t,\theta(u,e_1)))} \,\cH^{d-1}(du),
\end{align*}
where $e_1\coloneqq(1,0,\ldots,0)\in\RR^d$ denotes the first unit vector in $\RR^d$.

Let $d \geq 3$ and $f \colon \SS^{d-1} \to [0,\infty)$ be a measurable function. The Funk-Hecke formula, in its simplest form, says that
$$
\int_{\SS^{d-1}} f(\langle u,e_1\rangle)\,\cH^{d-1}(du) = {2\pi^{d-1\over 2}\over\Gamma({d-1\over 2})}\int_{-1}^{+1}f(h)(1-h^2)^{d-3\over 2}\,dh,
$$
where $\langle\,\cdot\,,\,\cdot\,\rangle$ is the Euclidean standard scalar product, see \cite[Equation (2.6)]{KSTBook} or \cite[Theorem 6]{MuellerBook}. In particular, substituting $h=\cos\theta$, we obtain
\begin{equation*}
    \int_{\SS^{d-1}} f(\langle u,e_1\rangle)\, \cH^{d-1}(du) 
    = {2\pi^{d-1\over 2}\over\Gamma({d-1\over 2})} \int_0^\pi f(\cos(\theta)) \sin^{d-2}(\theta) \,d\theta.
\end{equation*}
Using this identity, we get
\begin{align*}
\int_{\SS^{d-1}}e^{-\gamma\, c_d \, (s+t+d(s,t,\theta(u,e_1)))} \,\cH^{d-1}(du) &= \int_{\SS^{d-1}}e^{-\gamma\, c_d \, (s+t+d(s,t,\langle u,e_1\rangle))} \,\cH^{d-1}(du)\\
&= {2\pi^{d-1\over 2}\over\Gamma({d-1\over 2})} \int_0^\pi e^{-\gamma\, c_d \, (s+t+d(s,t,\theta))} \sin^{d-2}(\theta)\, d\theta.
\end{align*}
In summary, we arrive at
\[ \EE\cH^d(Z_o\cap B_R)^2
= {4\pi^{2d-1\over 2}\over\Gamma({d\over 2})\Gamma({d-1\over 2})} \int_0^R \int_0^R \int_0^\pi 
e^{-\gamma\, c_d \, (s+t+d(s,t,\theta))} \sin^{d-2}(\theta)\, d\theta \sinh^{d-1}(s)\,ds \,\sinh^{d-1}(t)\,dt. \]

From now on we will focus on the critical case $\gamma = \gamma_c^{(d)}$, where $\gamma_c^{(d)}$ is given by \eqref{eq:CriticalIntensity}. Noting that $c_d \cdot \gamma_c^{(d)} = \frac{d-1}{2}$ and taking $\gamma = \gamma_c^{(d)}$ we arrive at
\begin{equation}\label{eq:TruncSecMomInt}
\EE\cH^d(Z_o\cap B_R)^2
= {4\pi^{2d-1\over 2}\over\Gamma({d\over 2})\Gamma({d-1\over 2})} \int_0^R \int_0^R \int_0^\pi 
g(s,t,\theta)\, d\theta ds dt
\end{equation}
with
\[ g(s,t,\theta) \coloneqq e^{- \frac{d-1}{2}(s+t+d(s,t,\theta))} \sin^{d-2}(\theta) \sinh^{d-1}(s)\sinh^{d-1}(t),\qquad s,t \geq 0, \theta \in [0,\pi]. \]
Ignoring the constant prefactor, we now show that
\[ I(R) \coloneqq \int_0^R \int_0^R \int_0^\pi 
g(s,t,\theta)\, d\theta ds dt\]
is bounded from above and below by a constant multiple of $R^3$. We start with the upper bound.

\begin{lemma}\label{lem:UpperBoundI}
    There exists a constant $C_d > 0$ only depending on $d$, such that $I(R) \leq C_d  R^3$ for all $R \geq 1$.
\end{lemma}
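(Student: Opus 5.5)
The plan is to bound the integrand $g$ pointwise by an elementary function of $(s,t,\theta)$. The inner $\theta$-integral should then grow only like $1+\min(s,t)$, and integrating that over $[0,R]^2$ gives the $R^3$ bound.

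\emph{Step 1: rewrite the distance term.} Using $\cosh(s)\cosh(t)-\sinh(s)\sinh(t)=\cosh(s-t)$ and $1-\cos\theta=2\sin^2(\theta/2)$, the hyperbolic law of cosines becomes
\[
\cosh d(s,t,\theta)=\cosh(s-t)+2\sinh(s)\sinh(t)\sin^2(\theta/2).
\]
Since $e^{D}\ge\cosh D$, this yields two lower bounds for $e^{d(s,t,\theta)}$:
\[
e^{d(s,t,\theta)}\ge\cosh(s-t),\qquad e^{d(s,t,\theta)}\ge 2\sinh(s)\sinh(t)\sin^2(\theta/2).
\]
The first is just the reverse triangle inequality \eqref{eq:upper_lower_dist}.

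\emph{Step 2: pointwise bound on $g$.} Use $\sinh^{d-1}(s)\sinh^{d-1}(t)\le 2^{2-2d}e^{(d-1)(s+t)}$, so that
\[
g\le C\,\sin^{d-2}(\theta)\,\bigl(e^{s+t-d(s,t,\theta)}\bigr)^{\frac{d-1}{2}}.
\]
The first bound from Step 1 gives $e^{s+t-d}\le e^{s+t}/\cosh(s-t)\le 2e^{2m}$, where $m=\min(s,t)$; this holds for all $s,t\ge0$. For $s,t\ge1$ the second bound gives $e^{s+t-d}\le C/\sin^2(\theta/2)$, because $\sinh(s)\ge c\,e^{s}$ there. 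Together with $\sin\theta\le 2\sin(\theta/2)$ and $\sin(\theta/2)\ge\theta/\pi$, this gives on $\{s,t\ge1\}$
\[
g\le C\,\min\bigl(e^{(d-1)m},\theta^{-(d-1)}\bigr)\,\theta^{d-2}.
\]
On the region where $s<1$ or $t<1$ we have $m<1$, so the first bound alone makes $g$ bounded. That region therefore contributes only $O(R)$.

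\emph{Step 3: integrate in $\theta$, then in $s,t$.} Split the $\theta$-integral at $\theta=e^{-m}$.
\begin{itemize}
\item On $[0,e^{-m}]$, use the bound $e^{(d-1)m}\theta^{d-2}$; the integral is $O(1)$.
\item On $[e^{-m},\pi]$, use the bound $\theta^{-1}$; the integral is $O(1+m)$.
\end{itemize}
Hence $\int_0^\pi g\,d\theta\le C(1+\min(s,t))$. Integrating over $[0,R]^2$ gives $\int_0^R\int_0^R(1+\min(s,t))\,ds\,dt\le R^2+R^3\le 2R^3$ for $R\ge1$, which proves the lemma.

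The main obstacle is Step 2. The naive bound $d\ge|s-t|$ alone gives an integrand of size $e^{(d-1)\min(s,t)}$, which is far too large. One needs the $\sin^2(\theta/2)$ refinement of the law of cosines to see that this large contribution is confined to an angular window of width about $e^{-m}$. At criticality the exponent $\frac{d-1}{2}$ exactly cancels the volume growth, which is precisely what makes the remaining angular integral logarithmic, of order $m$, and produces the third power of $R$.
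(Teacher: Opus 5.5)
Your proposal is correct and follows essentially the same route as the paper. Both arguments split the angular integral at $e^{-\min(s,t)}$, use the reverse triangle inequality on the small-angle window to get an $O(1)$ contribution, and use $e^{d(s,t,\theta)}\ge\sinh(s)\sinh(t)(1-\cos\theta)$ on the remaining angles to get an $O(1+\min(s,t))$ contribution. The only cosmetic difference is that the paper cancels $\sinh^{\frac{d-1}{2}}(s)\sinh^{\frac{d-1}{2}}(t)$ directly against $e^{-\frac{d-1}{2}(s+t)}$ instead of first replacing $\sinh$ by exponentials, so it needs no separate treatment of the region $\min(s,t)<1$.
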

\begin{proof} 
    For fixed $s,t > 0$, can use the lower bound from \eqref{eq:upper_lower_dist} to bound
    \begin{align*}
        \int_0^{e^{-s \wedge t}} g(s,t,\varphi) \,d\varphi
        &\leq e^{-\frac{d-1}{2}(s+t+|s-t|)} \sinh^{d-1}(s) \sinh^{d-1}(t) \int_0^{e^{-s\wedge t}} \sin^{d-2}(\varphi) \,d \varphi\\
        &\leq e^{-\frac{d-1}{2}(s+t+|s-t|)} e^{(d-1)(s+t)} \int_0^{e^{-s\wedge t}} \varphi^{d-2} \, d \varphi\\
        &= e^{\frac{d-1}{2}(s+t-|s-t|)} \frac{1}{d-1} (e^{-s\wedge t})^{d-1}\\
        &= \frac{1}{d-1} e^{\frac{d-1}{2}(s+t-|s-t| - 2 (s\wedge t))}\\
        &= \frac{1}{d-1},
    \end{align*}
    also using that $\sinh(x)\leq e^x$ for all $x\in\RR$.
    This yields
    \begin{equation*}
        \int_0^R \int_0^R \int_0^{e^{-(s\wedge t)}} g(s,t,\varphi) \,d \varphi ds dt \leq \frac{R^2}{d-1}. 
    \end{equation*}

    For $s,t\geq 0$ and $\varphi \in [0,\pi]$, the elementary inequalities $\arcosh(x) \geq \log(x)$, $x \geq 1$, and $\cosh(x)\geq \sinh(x)$, $x\geq 0$, yield
    \[ e^{d(s,t,\varphi)} \geq \cosh(s)\cosh(t) - \sinh(s)\sinh(t)\cos(\varphi) \geq \sinh(s) \sinh(t) (1-\cos(\varphi)),   \]
    and thus
    \begin{align*}
    g(s,t,\varphi) 
    &\leq e^{-\frac{d-1}{2}(s+t)} \left(\sinh(s)\sinh(t) (1-\cos(\varphi))\right)^{-\frac{d-1}{2}} \sin^{d-2}(\varphi) \sinh^{d-1}(s)\sinh^{d-1}(t)\\
    &= e^{-\frac{d-1}{2}(s+t)} \left(1-\cos(\varphi)\right)^{-\frac{d-1}{2}} \sin^{d-2}(\varphi) \sinh^\frac{d-1}{2}(s)\sinh^\frac{d-1}{2}(t)\\
    &\leq \left(1-\cos(\varphi)\right)^{-\frac{d-1}{2}} \sin^{d-2}(\varphi),
    \end{align*}
    where we again used the bound $\sinh(x)\leq e^x$ for $x\in\RR$, in the last line.
    A Taylor expansion of the cosine also yields $1-\cos(x) \geq x^2/8$ for $x \in [0,\pi]$.
    Combined with the above inequality and the elementary inequality $\sin(x) \leq x$, $x \geq 0$, we get
    \begin{align*}
        \int_0^R \int_0^R \int_{e^{-(s\wedge t)}}^\pi g(s,t,\varphi) \, d\varphi ds dt
        &\leq \int_0^R \int_0^R \int_{e^{-(s\wedge t)}}^\pi 8^\frac{d-1}{2} \varphi^{-(d-1)} \varphi^{d-2} \, d\varphi ds dt\\
        &= \int_0^R \int_0^R  8^\frac{d-1}{2} (s\wedge t +\log(\pi)) \,ds dt\\
        &= 8^\frac{d-1}{2} \Big(\log(\pi) R^2 + \frac{1}{3}R^3\Big).
    \end{align*}
    Combined with the bound for $\varphi \in [0,e^{-s\wedge t}]$, the claim follows.
\end{proof}

\begin{lemma}\label{lem:LowerBoundI}
    There exists a constant $c_d > 0$ only depending on $d$, such that $I(R) \geq c_d  R^3$, $R\geq 2$.
\end{lemma}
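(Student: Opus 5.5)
The plan is to mirror the proof of Lemma \ref{lem:UpperBoundI}: the cubic growth there comes from the angular range $\varphi\in[e^{-(s\wedge t)},\pi]$, where $g$ behaves like $\varphi^{-1}$. For the lower bound I will show that on this range $g$ is also bounded \emph{below} by a constant multiple of $\varphi^{-1}$. Concretely, I restrict the integral to $s,t\in[1,R]$ and $\varphi\in[e^{-(s\wedge t)},\pi/2]$. This is legitimate since $g\ge 0$, and the interval is nonempty because $e^{-(s\wedge t)}\le e^{-1}<\pi/2$.

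The key step is an upper bound on $e^{d(s,t,\varphi)}$. I rewrite the law of cosines as
\[
\cosh d(s,t,\varphi)=\cosh(s-t)+\sinh(s)\sinh(t)\,(1-\cos\varphi).
\]
Then I use three elementary bounds:
\begin{itemize}
\item $1-\cos\varphi\le \varphi^2/2$;
\item $\sinh(s)\sinh(t)\ge c\,e^{s+t}$ for $s,t\ge1$, with $c=((1-e^{-2})/2)^2$;
\item $\cosh(s-t)\le e^{|s-t|}=e^{s+t-2(s\wedge t)}\le e^{s+t}\varphi^2$ whenever $\varphi\ge e^{-(s\wedge t)}$.
\end{itemize}
Together they give $\cosh d\le \tfrac32 e^{s+t}\varphi^2$. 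Since $e^{x}\le 2\cosh x$, this yields $e^{d(s,t,\varphi)}\le 3e^{s+t}\varphi^2$ on the restricted domain.

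Inserting this into $g$, I get
\[
g(s,t,\varphi)\ge 3^{-\frac{d-1}{2}} e^{-(d-1)(s+t)}\varphi^{-(d-1)}\sin^{d-2}(\varphi)\sinh^{d-1}(s)\sinh^{d-1}(t).
\]
Next I apply $\sinh(s)\ge \tfrac{1-e^{-2}}{2}e^{s}$ for $s\ge1$, and $\sin\varphi\ge \tfrac{2}{\pi}\varphi$ on $[0,\pi/2]$. This gives $g\ge c_d'\varphi^{-1}$ with $c_d'>0$ depending only on $d$. For $d=2$ the sine factor is simply $1$, so the argument is uniform in $d\ge2$.

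Integrating over $\varphi$ gives
\[
\int_{e^{-(s\wedge t)}}^{\pi/2}\varphi^{-1}\,d\varphi=s\wedge t-\log(\pi/2)\ge s\wedge t-\tfrac12,
\]
which is at least $\tfrac12(s\wedge t)$ for $s,t\ge1$. It remains to bound
\[
\int_1^R\int_1^R (s\wedge t)\,ds\,dt\ge \int_{R/2}^R\int_{R/2}^R \tfrac R2\,ds\,dt=\tfrac{R^3}{8}
\]
for $R\ge2$, which gives $I(R)\ge c_dR^3$.

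The only delicate point is controlling the $\cosh(s-t)$ term. It is exactly the cutoff $\varphi\ge e^{-(s\wedge t)}$ that makes this term dominated by $e^{s+t}\varphi^2$. Everything else consists of routine elementary inequalities.
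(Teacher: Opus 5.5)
Your proposal is correct and is essentially the paper's argument: restrict to $s,t\ge 1$ and $\varphi\ge e^{-(s\wedge t)}$, use $\cosh d(s,t,\varphi)=\cosh(s-t)+\sinh(s)\sinh(t)(1-\cos\varphi)$ together with $e^{x}\le 2\cosh x$ to get $e^{d(s,t,\varphi)}\le C e^{s+t}\varphi^2$, deduce $g\ge c\,\varphi^{-1}$, and integrate (the paper cuts at $\varphi\le 1$ and controls $e^{|s-t|}$ via $1-\cos\varphi\ge\varphi^2/8$, while you cut at $\pi/2$ and bound $e^{|s-t|}\le e^{s+t}\varphi^2$ directly). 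There are two harmless slips: the bound $\cosh d\le\frac32 e^{s+t}\varphi^2$ needs the upper bound $\sinh(s)\sinh(t)\le e^{s+t}$ rather than the lower bound you list (that one is only used afterwards for the $\sinh^{d-1}$ factors), and $\int_{e^{-(s\wedge t)}}^{\pi/2}\varphi^{-1}\,d\varphi=s\wedge t+\log(\pi/2)$, not $s\wedge t-\log(\pi/2)$, which only strengthens your estimate.
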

\begin{proof}
    Fix $s,t \geq 1$ and $\varphi \in [e^{-(s\wedge t)},1]$.
    Using the bound $\arcosh(x) \leq \log(2x)$, $x \geq 1$, we obtain
    \begin{align*}
    e^{d(s,t,\varphi)} 
    &\leq 2(\cosh(s)\cosh(t)-\sinh(s)\sinh(t)\cos(\varphi))\\
    &= 2(\cosh(s-t) + \sinh(s)\sinh(t)(1-\cos(\varphi)))\\
    &\leq 2(e^{|s-t|} + e^{s+t}(1-\cos(\varphi)))
    \end{align*}
    where we used the identity $\cosh(x-y) = \cosh(x)\cosh(y)-\sinh(x)\sinh(y)$, $x,y\in\RR$, in the second line, and the inequalities $\cosh(x) \leq e^{|x|}$ and $\sinh(x)\leq e^x$, for  $x \in \RR$  in the last line.
    
    Recall that $\varphi \geq e^{-(s\wedge t)}$.
    Using the bound $1-\cos(x) \geq x^2/8$, $x \in [0,\pi]$, from the previous lemma, we observe that
    \[ e^{s+t}(1-\cos(\varphi))
        \geq {1\over 8}e^{s+t} \varphi^2
        \geq {1\over 8}e^s e^t e^{-2(s\wedge t)}
        = {1\over 8}e^{|s-t|}. \]
    Together with the bound we derived above and the inequality $1-\cos(x) \leq x^2/2$, $x\geq 0$, which can again be derived using a Taylor expansion, this gives
    \[ e^{d(s,t,\varphi)}
    \leq 2(1+8) e^{s+t}(1-\cos(\varphi)) 
    \leq 9 e^{s+t} \varphi^2. \]
     We can thus bound 
    \begin{align*}
        g(s,t,\varphi)
        &\geq e^{-\frac{d-1}{2}(s+t)} (9 e^{s+t} \varphi^2)^{-\frac{d-1}{2}} \sin^{d-2}(\varphi)\sinh^{d-1}(s)\sinh^{d-1}(t)\\
        &= 9^{-\frac{d-1}{2}} e^{-(d-1)(s+t)} \sinh^{d-1}(s)\sinh^{d-1}(t) \varphi^{-(d-1)} \sin^{d-2}(\varphi).
    \end{align*}
Recall that $\sinh(x) \geq e^x/4$ for $x \geq 1$ and note that $\sin(x) \geq x/2$ for $x\in[0,1]$, so that 
    \[ g(s,t,\varphi) \geq C_1 \varphi^{-1}\, \]
    with a constant $C_1 > 0$ only depending on the dimension.
    This yields
    \begin{align*}
    I(R) 
    &\geq \int_1^R \int_1^R \int_{e^{-(s\wedge t)}}^1 g(s,t,\varphi) \,d\varphi ds dt\\
    &\geq C_1 \int_1^R \int_1^R (s \wedge t) \,d\varphi ds dt\\
    &= C_1 \left(\frac{1}{3} R^3 - R + \frac{2}{3} \right),
    \end{align*}
    and the claim follows.
\end{proof}

\begin{proof}[Proof of Theorem \ref{thm:critical}]
    The result is a direct consequence of \eqref{eq:TruncSecMomInt}, Lemma \ref{lem:UpperBoundI} and Lemma \ref{lem:LowerBoundI}.
\end{proof}

\begin{remark}
    The asymptotics for the first and second moment of $\cH^d(Z_o \cap B_R)$ as $R \to \infty$ at the critical intensity $\gamma=\gamma_c^{(d)}$ are similar to those of a critical Galton-Watson process, which may be seen as a kind of discrete analogue of the zero-cell.

    Indeed, let $X$ be a non-negative, integer-valued random variable with mean $1$ and variance $0<\sigma^2<\infty$, and consider the associated Galton-Watson process $(Z_n)_{n \in \NN_0}$ with $Z_0 \coloneqq 1$ and
    \[ Z_{n+1} \coloneqq \sum_{i=1}^{Z_n} X_{n,i}, \]
    where $(X_{n,i})_{n,i\geq 1}$ is an array of i.i.d.\ copies of $X$.
    Summing the number of individuals in each generation, we get the total offspring $V_n \coloneqq Z_0 + \ldots + Z_n$ after $n$ generations, which satisfies
 \begin{equation}\label{eq:asymptotics_GW}
        \E[V_n] = n+1 \qquad \text{and} \qquad \Var(V_n) = \frac{n(n+1)(2n+1)}{6} \sigma^2 \sim \frac{n^3}{3}\sigma^2.
    \end{equation}
    Indeed, $\E[V_n] = (n+1) \cdot \E[Z_0] = n+1$, since $(Z_n)_{n\in\NN_0}$ is a martingale according to \cite[Theorem I.6.1]{AthreyaNey}.
    Using the martingale property, we further observe that
    \[ \cov(Z_i,Z_j) = \cov(Z_i,\EE[Z_j|Z_i]) = \cov(Z_i,Z_i) \]
    for $0 \leq i \leq j$, so that
    \[ \Var(V_n) = \sum_{i=0}^n \sum_{j=0}^n \cov(Z_i,Z_j) = \sum_{i=0}^n\sum_{j=0}^n \Var(Z_{i\wedge j}). \]
    Combined with the identity $\Var(Z_i) = i \cdot \sigma^2$ from \cite[Equation (I.2.2)]{AthreyaNey}, one arrives at \eqref{eq:asymptotics_GW}.
\end{remark}

\begin{remark}
    A (central) limit theorem in the critical case cannot be expected.
    Indeed, since the zero-cell is almost surely bounded as shown in \cite{BuehlerGusakovaRecke,GodlandKabluchkoThaeleBetaStar,PorretBlanc}, $\cH^{d}(Z_0 \cap B_R) / a(R) \to 0$ almost surely, as $R\to\infty$, for any positive function $a$ with $a(R) \to \infty$.
    Hence, the only distributions that could arise as limits of $(\cH^{d}(Z_0 \cap B_R) - b(R)) / a(R)$ are degenerate ones.

    The same is true for critical Galton-Watson processes.
    Since the extinction probability of such a process is $1$ by \cite[Theorem I.5.1]{AthreyaNey}, it holds that $V_n / a_n \to 0$, $n \to \infty$, a.s.\ for any positive sequence with $a_n \to \infty$ using the same notation as in the previous remark.
    Hence, if $(V_n-b_n)/a_n$ converges in distribution, the limit must be degenerate.
\end{remark}

\section{The exact second moment: Proof of Theorem \ref{thm:supercritical} and its corollaries}\label{sec:2ndMomentExact}

\subsection{Proof of Theorem \ref{thm:supercritical}}

Consider the zero cell $Z_o$ of a hyperbolic Poisson hyperplane tessellation in the supercritical case $\gamma>\gamma_c^{(d)}$, in which it is bounded almost surely. Taking in \eqref{eq:TruncatedMomentIntegralRep} the limit as $R\to\infty$ and using the monotone convergence theorem, we obtain the identity
\begin{equation}\label{eq:2ndMomStart_d}
	\EE\cH^d(Z_o)^2
	= \int_{\HH^d}\int_{\HH^d}
	e^{-\gamma c_d(d_h(o,x)+d_h(o,y)+d_h(x,y))}\,\cH^d(dx)\cH^d(dy),
\end{equation}
with $c_d$ given by \eqref{eq:def_cd}. Let $f: \HH^d \to \RR$ be the radial function defined by
$$f(x) \coloneqq e^{-\gamma c_d d_h(o,x)}, \qquad x \in \HH^d.$$
Since
\[ d_h(x,y) = d_h(g_x \cdot o, g_y \cdot o) = d_h(o,g_x^{-1}g_y \cdot o) \]
for any $g_x,g_y \in G$ with $g_x \cdot o = x$, $g_y \cdot o = y$, the integrand in \eqref{eq:2ndMomStart_d} is equal to
$$e^{-\gamma c_d(d_h(o,x)+d_h(o,y)+d_h(x,y))} = f(x)\,f(y)\,f(x^{-1}y),$$
with $f(x^{-1}y)$ defined as in \eqref{eq:def_inverse}.
Recalling \eqref{eq:2ndMomStart_d}, the second moment can then be written as
\begin{equation}\label{eq:2ndMomConvolution}
    \EE\cH^d(Z_o)^2
	= \int_{\HH^d}\int_{\HH^d}
	f(x) f(y) f(x^{-1}y)\,\cH^d(dx)\cH^d(dy)
    = \int_{\HH^d} f(y) (f*f)(y) \cH^d(dy).
\end{equation}

The goal is to evaluate the second moment by passing to the spectral side via the spherical transform.
This follows the general strategy used in the Euclidean setting and leads to \eqref{eq:second_moment_help1}, which is the hyperbolic counterpart of \cite[Equation (16.16)]{HS}.
There is, however, a subtle difference in the argument.
Whereas in the Euclidean case the function $x\mapsto e^{-c|x|}$, $c>0$, belongs to $L^p(\RR^d)$ for every $p\in[1,\infty]$, the same is not true for our function $f$ throughout the whole supercritical regime $\gamma>\gamma_c^{(d)}$.

Indeed, let $p\in[1,\infty)$.
By passing to polar coordinates and using that $\sinh(r)$ be haves like $e^r/2$ as $r\to\infty$, it is easy to see that there exist constants $c,C>0$ such that
\[
c\int_1^\infty e^{-p\gamma c_d r+(d-1)r}\,dr
\leq
\omega_d\int_0^\infty e^{-p\gamma c_d r}\sinh^{d-1}(r)\,dr
=
\int_{\HH^d} |f(x)|^p\,\cH^d(dx)
\leq
C\int_0^\infty e^{-p\gamma c_d r+(d-1)r}\,dr.
\]
Consequently,
\[
f\in L^p(\HH^d)
\qquad\Longleftrightarrow\qquad
p>\frac{d-1}{\gamma c_d}
=
\frac{2\gamma_c^{(d)}}{\gamma}.
\]
Since $\gamma>\gamma_c^{(d)}$, the interval $(\frac{2\gamma_c^{(d)}}{\gamma},2)$
is non-empty, and hence one can choose $p\in[1,2)$ such that $f\in L^p(\HH^d)$.
The Kunze-Stein phenomenon described in Proposition \ref{prop:kunze_stein} therefore implies that
$f*f\in L^2(\HH^d)$.
In view of \eqref{eq:2ndMomConvolution}, it follows that the second moment $\E\cH^d(Z_o)^2$ can be written as the inner product
\[
\E\cH^d(Z_o)^2=\langle f,f*f\rangle_{L^2(\HH^d)}.
\]
Moreover, Lemma \ref{lem:convolution} yields $\widehat{(f*f)}=(\widehat f)^2$.
Using the Plancherel identity \eqref{eq:Plancherel}, and noting that $f$ and the spherical functions $\phi_{-\lambda}$ are real-valued for $\lambda\in\RR$, so that $\widehat f(\lambda)\in\RR$, we arrive at
\begin{equation}\label{eq:second_moment_help1}
\EE\cH^d(Z_o)^2
=
\langle f,f*f\rangle_{L^2(\HH^d)}
=
\int_0^\infty \widehat f(\lambda)\,\overline{\widehat{(f*f)}(\lambda)}\,|c(\lambda)|^{-2}\,d\lambda
=
\int_0^\infty \widehat f(\lambda)^3\,|c(\lambda)|^{-2}\,d\lambda.
\end{equation}
Substituting the value for $|c(\lambda)|^{-2}$ given by \eqref{eq:Cfunction}, we get
\begin{equation}\label{eq:2ndMomAfterSphTrafo_d}
	\EE\cH^d(Z_o)^2 = \frac{2^{1-d}\pi^{-d/2}}{\Gamma(d/2)} \int_0^\infty \widehat f(\lambda)^3 \left|\frac{\Gamma\bigl(\frac{d-1}{2}+i\lambda\bigr)}{\Gamma(i\lambda)}\right|^2\,d\lambda.
\end{equation}

Next, we compute the spherical transform of $f$ explicitly.

\begin{lemma}\label{lem:SphTrafo}
We have
\[
\widehat{f}(\lambda)
 = \frac{\gamma}{4} \pi^{\frac{d-2}{2}} \Gamma\left(\frac{d}{2}\right)
\frac{\left| \Gamma\left(\frac{\gamma c_d + 1/2 - d/2 + i\lambda}{2}\right) \right|^2}{
	\left| \Gamma\left(\frac{\gamma c_d + 3/2 + d/2 + i\lambda}{2}\right) \right|^2},\qquad \lambda>0.
\]
\end{lemma}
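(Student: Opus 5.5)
We compute $\widehat f$ directly from the Legendre-function representation of $\phi_\lambda$. Write $\beta\coloneqq\gamma c_d$ and $\mu\coloneqq\frac{2-d}{2}$. Polar coordinates \eqref{eq:PolarCoordinates} give
\[
\widehat f(\lambda)=\omega_d K(d)\int_0^\infty e^{-\beta r}\sinh^{d-1}(r)\,\sinh^{\mu}(r)\,P^{\mu}_{-\frac12+i\lambda}(\cosh r)\,dr .
\]
Since $\sinh^{d-1}(r)\sinh^{\mu}(r)=\sinh^{d/2}(r)=\sinh^{1-\mu}(r)$, the substitution $x=\cosh r$, $dx=\sinh r\,dr$, turns this into
\[
\widehat f(\lambda)=\omega_d K(d)\int_1^\infty \bigl(x+\sqrt{x^2-1}\bigr)^{-\beta}(x^2-1)^{-\mu/2}P^{\mu}_{\nu}(x)\,dx,\qquad \nu=-\tfrac12+i\lambda .
\]
This integral should be available in closed form in the tables (Gradshteyn--Ryzhik 7.141-type, or Erdélyi's tables of Mellin/Laplace transforms of Legendre functions). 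Alternatively, I would stay in the $r$-variable and use the Laplace-transform formula for $\sinh^{1-\mu}(r)P^\mu_\nu(\cosh r)$. Either route gives a ratio of the form
\[
C\,\frac{\Gamma\!\left(\frac{\beta-\mu-\nu}{2}\right)\Gamma\!\left(\frac{\beta-\mu+\nu+1}{2}\right)}{\Gamma\!\left(\frac{\beta+\mu+\nu+2}{2}\right)\Gamma\!\left(\frac{\beta+\mu-\nu+1}{2}\right)}\cdot(\text{elementary factor in }\beta).
\]

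If no table entry matches exactly, I would derive it as follows. Expand $P^\mu_\nu(\cosh r)$ through its hypergeometric representation in the variable $e^{-2r}$, or more simply use the Mehler--Dirichlet/Laplace integral representation of the spherical function, $\phi_\lambda(r)=c\int_0^\pi(\cosh r-\sinh r\cos\theta)^{-\frac{d-1}{2}+i\lambda}\sin^{d-2}\theta\,d\theta$. Then exchange the order of integration. The $r$-integral becomes a Beta-type integral after writing $\sinh^{d-1}r$ and $e^{-\beta r}$ in terms of $e^{-r}$. The resulting $\theta$-integral is a Gauss hypergeometric function at argument $1$, which Gauss's summation theorem evaluates.

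Next, I match parameters. With $\mu=\frac{2-d}{2}$ and $\nu=-\frac12+i\lambda$, the numerator arguments become $\frac{\beta-\mu-\nu}{2}=\frac{\beta+\frac12-\frac d2\cdot\ldots}{2}$ and, more carefully, the two numerator Gammas come out as complex conjugates $\Gamma\!\left(\frac{\beta+1/2-d/2\pm i\lambda}{2}\right)$. The two denominator Gammas are likewise conjugates $\Gamma\!\left(\frac{\beta+3/2+d/2\pm i\lambda}{2}\right)$. For real $\lambda$, each conjugate pair gives $|\Gamma(\cdot)|^2$, which is the claimed structure.

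Finally, I fix the constant. The $\beta$-dependent elementary factor should collapse to a multiple of $\beta=\gamma c_d$; this accounts for the factor $\gamma$, combined with $c_d=\frac{\Gamma(d/2)}{2\sqrt\pi\,\Gamma(\frac{d+1}{2})}$. Multiplying by $\omega_d K(d)=\frac{2\pi^{d/2}}{\Gamma(d/2)}2^{\frac{d-2}{2}}\Gamma(\frac d2)$ and simplifying with the duplication formula \eqref{eq:Legendre} should yield $\frac{\gamma}{4}\pi^{\frac{d-2}{2}}\Gamma(\frac d2)$. As a sanity check I would evaluate at $\lambda\to i\frac{d-1}{2}$, where $\phi\equiv1$. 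There $\widehat f$ must equal $\int_{\HH^d}f\,d\cH^d=\omega_d\int_0^\infty e^{-\beta r}\sinh^{d-1}r\,dr$, which is a Beta integral and provides an independent check of the constant.

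The main obstacle is the integral evaluation itself: locating or deriving the correct Laplace-transform formula for $\sinh^{1-\mu}(r)P^\mu_\nu(\cosh r)$ with $\mu$ possibly a negative half-integer or integer, and tracking the normalizations. Convergence requires $\beta>\frac{d-1}{2}$, which holds in the supercritical regime. The Gamma bookkeeping that turns the raw formula into the stated symmetric form should be routine once the integral is in hand.
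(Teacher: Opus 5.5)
There is a genuine gap. The evaluation of the integral is the entire content of the lemma, and you do not carry it out; the concrete steps you do commit to would not produce the stated formula.

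In your fallback route, exchange the integrals and look at the $r$-integral $\int_0^\infty e^{-\beta r}\sinh^{d-1}(r)\,(\cosh r-\sinh r\cos\theta)^{-\frac{d-1}{2}+i\lambda}\,dr$. It is not of Beta type. Since $\cosh r-\sinh r\cos\theta=\frac{e^{r}}{2}\bigl((1-\cos\theta)+e^{-2r}(1+\cos\theta)\bigr)$, the substitution $t=e^{-2r}$ gives Euler's integral for a ${}_2F_1$ with argument $-\cot^2(\theta/2)$. After Pfaff's transformation, the remaining $\theta$-integral is a ${}_3F_2$ at $1$, not a single Gauss sum.

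More seriously, the parameter matching fails. With your $\mu=\frac{2-d}{2}$ and $\nu=-\frac12+i\lambda$, the template gives $\frac{\beta-\mu-\nu}{2}=\frac{\beta+\frac{d-1}{2}-i\lambda}{2}$ and denominator arguments $\frac{\beta+\frac{5-d}{2}\pm i\lambda}{2}$. The target needs $\frac{\beta-\frac{d-1}{2}\pm i\lambda}{2}$ and $\frac{\beta+\frac{d+3}{2}\pm i\lambda}{2}$. The ``$\ldots$'' in your matching step hides exactly this discrepancy. By Stirling, your expression behaves like $|\lambda|^{d-3}$ while the target decays like $|\lambda|^{-(d+1)}$. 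So no $\lambda$-independent ``elementary factor in $\beta$'' can repair it. Your check at $\lambda=i\frac{d-1}{2}$ only tests the constant and cannot detect this.

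What is missing is the mechanism that produces the factor $\gamma$ and the shifted denominator: a two-term cancellation. The paper proceeds as follows:
\begin{enumerate}
\item It inserts Mehler's representation of $\phi_\lambda$, with Fubini justified by the bound on $\phi_0$.
\item It splits $\sinh r=\frac12(e^{r}-e^{-r})$ in the resulting Abel-type inner integral.
\item It evaluates both pieces via the integral representation of $Q^{\frac{2-d}{2}}_\nu$ and a Fourier cosine transform table entry. This gives $C\bigl(|\Gamma(v)|^2/|\Gamma(w)|^2-|\Gamma(v+1)|^2/|\Gamma(w+1)|^2\bigr)$ with $v=\frac{\beta-\frac{d-1}{2}+i\lambda}{2}$ and $w=\frac{\beta+\frac{d-1}{2}+i\lambda}{2}$.
\item It applies $\Gamma(z+1)=z\Gamma(z)$, which leaves $\frac{|\Gamma(v)|^2}{|\Gamma(w+1)|^2}\bigl(|w|^2-|v|^2\bigr)$ with $|w|^2-|v|^2=\beta\frac{d-1}{2}$.
\end{enumerate}

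Your template does have the right shape, but for the Laplace transform of $\sinh^{\kappa}(r)P^{-\kappa}_\nu(\cosh r)$ with $\kappa=\frac{d-2}{2}$ in place of $\mu$. Evaluated at $p=\beta\mp1$ after the same splitting, it reproduces these two ratios up to a common constant.

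Alternatively, your Harish-Chandra route can be completed. Put $\rho=\frac{d-1}{2}$ and $b=\frac{\beta-\rho-i\lambda}{2}$. The $\theta$-integral becomes $B(\rho,\rho)\,{}_3F_2(\rho-i\lambda,d,\rho;b+d,d-1;1)$. The contiguous pair $d,\,d-1$ splits this into two Gauss sums. Combined with the factor $B(b,d)$ from Euler's integral, the result is a constant times $\frac{\beta}{2}\,|\Gamma(v)|^2/|\Gamma(w+1)|^2$. In either version, this cancellation is the step your proposal has to actually carry out.
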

\begin{proof}
In polar coordinates on $\HH^d$, the volume element is $\cH^d(dx)=\sinh^{d-1}(r)\,dr\,d\sigma_{d-1}(u)$, see \eqref{eq:PolarCoordinates}. Hence
$$\widehat{f}(\lambda)=\omega_d\int_0^\infty e^{-\gamma c_d r}\,\phi_\lambda(r)\,\sinh^{d-1}(r)\,dr.$$
We next use the integral representation \cite[Formula 8.715.1]{Zwillinger} for the Legendre functions to write
$$\phi_\lambda(r) = C_1(d)\,\sinh^{2-d}(r) \int_0^r \frac{\cos(\lambda s)}{(\cosh (r)-\cosh (s))^{\frac{3-d}{2}}}\,ds,$$
where the constant $C_1(d)$ is given by
$$
C_1(d)
\coloneqq
\sqrt{\frac{2}{\pi}}\frac{K(d)}{\Gamma\!\left(\frac{d-1}{2}\right)}
=
\frac{2^{\frac{d-1}{2}}\Gamma\!\left(\frac d2\right)}{\sqrt{\pi}\,\Gamma\!\left(\frac{d-1}{2}\right)}.
$$
Substituting this into the expression for $\widehat{f}(\lambda)$, we obtain
$$\widehat{f}(\lambda) = \omega_d C_1(d)\int_0^\infty e^{-\gamma c_d r}\sinh(r) \left( \int_0^r \frac{\cos(\lambda s)}{(\cosh (r)-\cosh (s))^{\frac{3-d}{2}}}\,ds \right)dr.$$
We now justify the interchange of the order of integration by verifying absolute integrability of the corresponding double integral.
Since $|\cos(\lambda s)|\le 1$, we have
\begin{align*}
&\omega_d C_1(d)\int_0^\infty \int_0^r
e^{-\gamma c_d r}\sinh(r)
\frac{|\cos(\lambda s)|}{(\cosh(r)-\cosh(s))^{\frac{3-d}{2}}}
\,ds\,dr \\
&\le
\omega_d C_1(d)\int_0^\infty e^{-\gamma c_d r}\sinh(r)
\left(
\int_0^r \frac{1}{(\cosh(r)-\cosh(s))^{\frac{3-d}{2}}}\,ds
\right)\,dr\\
&=
\omega_d \int_0^\infty e^{-\gamma c_d r}\sinh^{d-1}(r)
\phi_0(r)\,dr
\end{align*}
Since $\gamma c_d > \gamma_c^{d} c_d = (d-1)/2$ and $\phi_0(r) \leq C (1+r)e^{-(d-1)r/2}$ for some constant $C> 0$ (see, for example, \cite[Lemma 2.4 and Remark 2.5]{ChenEtAl}), this integral converges. Hence the double integral is absolutely integrable, and Fubini's theorem applies. We may therefore interchange the order of integration and obtain
\[
\widehat{f}(\lambda)
=
\omega_d C_1(d)\int_0^\infty \cos(\lambda s)\,\cA(s)\,ds,
\]
where
\[
\cA(s)\coloneqq
\int_s^\infty
\frac{e^{-\gamma c_d r}\sinh(r)}{(\cosh(r)-\cosh(s))^{\frac{3-d}{2}}}\,dr,
\qquad s\ge 0.
\]

To determine $\cA(s)$, we utilize the integral representation \cite[Formula 8.715.2]{Zwillinger} of the Legendre functions of the second kind $Q_\nu^\mu$. For $\alpha>0$, ${\rm Re}(\mu) < 1/2$ and $\nu$ satisfying ${\rm Re}(\nu+\mu)>-1$, it states that
\begin{equation}\label{eq:Q_integral_d}
	\int_\alpha^\infty \frac{e^{-(\nu+\frac12)t}}{(\cosh (t)-\cosh(\alpha))^{\mu+\frac{1}{2}}}\,dt
	=
	\sqrt{\frac{2}{\pi}} e^{-\mu\pi i}\,\Gamma\!\left(\frac{1}{2}-\mu\right)\sinh^{-\mu}(\alpha)\,
	Q_\nu^{\frac{2-d}{2}}(\cosh\alpha).
\end{equation}
Using $\sinh r=\frac12(e^r-e^{-r})$, we can rewrite $\cA(s)$ as
\[
\cA(s)
=
\frac12\int_s^\infty
\frac{e^{-(\gamma c_d-1)r}}{(\cosh (r)-\cosh (s))^{\frac{3-d}{2}}}\,dr
- \frac12\int_s^\infty
\frac{e^{-(\gamma c_d+1)r}}{(\cosh (r)-\cosh (s))^{\frac{3-d}{2}}}\,dr.
\]
We now apply \eqref{eq:Q_integral_d} to the first part with $\mu = (2-d)/2$ and $\nu = \gamma c_d - 3/2$, and to the second part with $\mu = (2-d)/2$ and $\nu = \gamma c_d - 1/2$.
Note that the condition ${\rm Re}(\nu+\mu)>-1$ is met since $\gamma c_d > (d-1)/2$.
This yields, for $s>0$,
\[
\cA(s)
=
\frac{1}{\sqrt{2\pi}}e^{\frac{d-2}{2}\pi i}\,\Gamma\!\left(\frac{d-1}{2}\right)\sinh^{\frac{d-2}{2}}(s)
\left(
Q_{\gamma c_d-\frac32}^{\frac{2-d}{2}}(\cosh (s))
-
Q_{\gamma c_d+\frac12}^{\frac{2-d}{2}}(\cosh (s))
\right).
\]

Substituting this into the representation of $\widehat{f}(\lambda)$ and collecting constants, define
\[
C_2(d)
\coloneqq\omega_d C_1(d)\frac{1}{\sqrt{2\pi}}\Gamma\!\left(\frac{d-1}{2}\right)e^{\frac{d-2}{2}\pi i}
=2(2\pi)^{\frac{d-2}{2}}e^{\frac{d-2}{2}\pi i}.
\]
Then
\begin{align*}
	\widehat{f}(\lambda)
	&= C_2(d) \left( \int_0^\infty \sinh^{\frac{d-2}{2}}(s) \, Q_{\gamma c_d - \frac 32}^{\frac{2-d}{2}}(\cosh(s)) \cos(\lambda s) \, ds\right.\\
	&\qquad\qquad\qquad\qquad \left.- \int_0^\infty \sinh^{\frac{d-2}{2}}(s) \, Q_{\gamma c_d+\frac12}^{\frac{2-d}{2}}(\cosh(s)) \cos(\lambda s) \, ds \right)\\
	&=: C_2(d) (K_1(\lambda)-K_2(\lambda)).    
\end{align*}
Hence, the spherical transform is expressed as a difference of two Fourier cosine transforms of $\sinh$-weighted Legendre functions of the second kind with degree $\gamma c_d -\frac{3}{2}$ and $\gamma c_d +\frac{1}{2}$ respectively.
We evaluate these integrals using  \cite[Entry 1.12.47]{Oberhettinger}, which states for complex numbers $\mu,\nu$ with $\re(\mu)<\frac{1}{2}$ and $\re(\nu+\mu)>-1$ that
\begin{equation}\label{eq:cosine_transform}
\int_0^\infty \sinh^{-\mu}(s) Q_\nu^\mu(\cosh(s)) \cos(\lambda s) \,ds
= C(d,\mu) \frac{
\Gamma\left(\frac{1+\nu+\mu+i\lambda}{2}\right)
\Gamma\left(\frac{1+\nu+\mu-i\lambda}{2}\right)
}{
\Gamma\left(\frac{2+\nu-\mu+i\lambda}{2}\right)
\Gamma\left(\frac{2+\nu-\mu-i\lambda}{2}\right)},
\end{equation}
with the constant $C(d,\mu)$ given by $C(d,\mu) = e^{i\pi\mu} \sqrt{\pi} 2^{\mu - 2} \Gamma\left(\frac{1}{2}-\mu\right)$. 

Applying \eqref{eq:cosine_transform} with $\mu = \frac{2-d}{2}$ and $\nu = \gamma c_d - \frac{3}{2}$ (recall that $\gamma c_d > \frac{d-1}{2}$, so $\re(\nu+\mu)>-1$ is satisfied), and using the conjugation property $\Gamma(\bar z)=\overline{\Gamma(z)}$, we obtain
\[
K_1(\lambda) = C_3(d) \frac{\left| \Gamma\left(\frac{\gamma c_d + 1/2 - d/2 + i\lambda}{2}\right) \right|^2}{\left| \Gamma\left(\frac{\gamma c_d -1/2 + d/2 + i\lambda}{2}\right) \right|^2},\]
with
\[ C_3(d) \coloneqq e^{i\pi\frac{2-d}{2}} \sqrt{\pi} 2^{\frac{-d-2}{2}} \Gamma\left( \frac{d-1}{2} \right). \]
Similarly, we get
\[
K_2(\lambda) = C_3(d) \frac{\left| \Gamma\left(\frac{\gamma c_d + 1/2 - d/2 + i\lambda}{2} +1 \right) \right|^2}{\left| \Gamma\left(\frac{\gamma c_d -1/2 + d/2 + i\lambda}{2} +1\right) \right|^2}.
\]
To unify these terms, we define the complex numbers 
\[
v \coloneqq \frac{\gamma c_d + 1/2 - d/2 + i\lambda}{2} \qquad \text{and} \qquad w \coloneqq \frac{\gamma c_d -1/2 + d/2 + i\lambda}{2}.
\]
Using the identity $\Gamma(z+1) = z \Gamma(z)$, we then obtain
\[ K_1(\lambda) - K_2(\lambda) 
= C_3(d) \left( \frac{|\Gamma(v)|^2}{|\Gamma(w)|^2} - \frac{|\Gamma(v+1)|^2}{|\Gamma(w+1)|^2} \right) 
= C_3(d) \frac{|\Gamma(v)|^2}{|\Gamma(w+1)|^2} \left(|w|^2 - |v|^2\right). \]
Since
\[ |w|^2 - |v|^2 = \gamma c_d\frac{d-1}{2}, \]
this gives
\[
\widehat{f}(\lambda) = C_2(d) C_3(d) \gamma c_d\frac{d-1}{2} \frac{\left|\Gamma\left(\frac{\gamma c_d + 1/2 -d/2 +i\lambda}{2}\right)\right|^2}{\left|\Gamma\left(\frac{\gamma c_d + 3/2 +d/2 +i\lambda}{2}\right)\right|^2}.
\]
The claim follows after simplifying the constant.
\end{proof}

We are now ready to prove Theorem \ref{thm:supercritical}.

\begin{proof}[Proof of Theorem \ref{thm:supercritical}]
In terms of the parameter $a$, the spherical transform of $f$ determined in Lemma \ref{lem:SphTrafo} reads
\begin{equation}\label{eq:SphericalTrafoFinal_d}
	\widehat{f}(\lambda) = \frac{\gamma}{4} \pi^{\frac{d-2}{2}} \Gamma\left(\frac{d}{2}\right)
	\frac{\left| \Gamma\left(a + i\frac{\lambda}{2}\right) \right|^2}{
		\left| \Gamma\left(a + \frac{d+1}{2} + i\frac{\lambda}{2}\right) \right|^2}.    
\end{equation}
Inserting \eqref{eq:SphericalTrafoFinal_d} into \eqref{eq:2ndMomAfterSphTrafo_d}, we obtain 
\begin{equation}\label{eq:SecondMomentIntermediate_d}
	\EE\cH^d(Z_o)^2
	= \frac{2^{1-d} \pi^{-\frac{d}{2}}}{\Gamma\left(\frac{d}{2}\right)} \left( \frac{\gamma}{4} \pi^{\frac{d-2}{2}} \Gamma\left(\frac{d}{2}\right) \right)^3
	\int_0^\infty
	\frac{\left| \Gamma\left(a + i\frac{\lambda}{2}\right) \right|^6}{
		\left| \Gamma\left(a + \frac{d+1}{2} + i\frac{\lambda}{2}\right) \right|^6}
	\frac{\left| \Gamma\left(\frac{d-1}{2} + i\lambda\right) \right|^2}{\left| \Gamma(i\lambda) \right|^2}
	\, d\lambda.
\end{equation}
To unify the arguments of the Gamma functions, we substitute $\lambda = 2x$ and the integral becomes
\[
\EE\cH^d(Z_o)^2
= \frac{2^{2-d}}{\pi^{d/2}} \left( \frac{\gamma}{4} \right)^3 \pi^{\frac{3d-6}{2}} \Gamma\left(\frac{d}{2}\right)^2
\int_0^\infty
\frac{\left| \Gamma(a + ix) \right|^6}{
	\left| \Gamma\left(a + \frac{d+1}{2} + ix\right) \right|^6}
\frac{\left| \Gamma\left(\frac{d-1}{2} + 2ix\right) \right|^2}{\left| \Gamma(2ix) \right|^2}
\, dx.
\]
We next simplify the integrand using the Legendre duplication formula \eqref{eq:Legendre}. For the denominator, setting $z=ix$, we obtain
\[
|\Gamma(2ix)|^2 = \frac{1}{4\pi} |\Gamma(ix)|^2 \left|\Gamma\left(ix+\frac{1}{2}\right)\right|^2.
\]
For the numerator, setting $z=ix+\frac{d-1}{4}$, we obtain
\[
\left|\Gamma\left(\frac{d-1}{2}+2ix\right)\right|^2
= \frac{2^{d-3}}{\pi} \left|\Gamma\left(ix+\frac{d-1}{4}\right)\right|^2 \left|\Gamma\left(ix+\frac{d+1}{4}\right)\right|^2.
\]
Hence, their ratio simplifies to
\[
\frac{\left| \Gamma\left(\frac{d-1}{2} + 2ix\right) \right|^2}{\left| \Gamma(2ix) \right|^2}
= 2^{d-1} \frac{\left| \Gamma\left(ix+\frac{d-1}{4}\right) \right|^2 \left| \Gamma\left(ix+\frac{d+1}{4}\right) \right|^2}{\left| \Gamma(ix) \right|^2 \left| \Gamma\left(ix+\frac{1}{2}\right) \right|^2}.
\]
Substituting this back into the last expression for $\EE\cH^d(Z_o)^2$, we arrive at
\[
\EE\cH^d(Z_o)^2
= 2 \left( \frac{\gamma}{4} \right)^3 \pi^{d-3} \Gamma\left(\frac{d}{2}\right)^2
\int_0^\infty
\frac{\left| \Gamma(a + ix) \right|^6 \left| \Gamma\left(ix+\frac{d-1}{4}\right) \right|^2 \left| \Gamma\left(ix+\frac{d+1}{4}\right) \right|^2}{
	\left| \Gamma\left(a + \frac{d+1}{2} + ix\right) \right|^6 \left| \Gamma(ix) \right|^2 \left| \Gamma\left(ix+\frac{1}{2}\right) \right|^2}
\, dx.
\]

In a next step, we convert the integral over the positive real axis to a contour integral of Mellin--Barnes type. Since the integrand is an even function of $x$, as it depends only on moduli of the form $|\Gamma(A+ix)|^2=\Gamma(A+ix)\Gamma(A-ix)$, we may extend the integration range to $(-\infty,\infty)$ and divide by $2$. Thus, writing the last display as
\[
\EE\cH^d(Z_o)^2
= \pi^{d-3}\Gamma\left(\frac{d}{2}\right)^2\left(\frac{\gamma}{4}\right)^3
\int_{-\infty}^{\infty}
\frac{\left| \Gamma(a + ix) \right|^6 \left| \Gamma\!\left(ix+\frac{d-1}{4}\right) \right|^2 \left| \Gamma\!\left(ix+\frac{d+1}{4}\right) \right|^2}{
	\left| \Gamma\!\left(a + \frac{d+1}{2} + ix\right) \right|^6 \left| \Gamma(ix) \right|^2 \left| \Gamma\!\left(ix+\frac{1}{2}\right) \right|^2}
\, dx,
\]
we set $s=ix$, so that $dx=\frac{1}{i}\,ds$ and the path of integration becomes the imaginary axis $i\RR$. Multiplying and dividing by $2\pi$ to match standard contour integral notation, and expanding the squared moduli, yields
\begin{align}\label{eq:SecondMomentMellinBarnes_d}
	\nonumber\EE\cH^d(Z_o)^2
	&= 2 \pi^{d-2}\Gamma\left(\frac{d}{2}\right)^2 \left( \frac{\gamma}{4} \right)^3 \\
	&\quad\times\frac{1}{2\pi i}
	\int_{-i\infty}^{i\infty}
	\frac{\Gamma(a+s)^3 \Gamma(a-s)^3 \Gamma(\frac{d-1}{4}+s)\Gamma(\frac{d-1}{4}-s)\Gamma(\frac{d+1}{4}+s)\Gamma(\frac{d+1}{4}-s)}{
		\Gamma(a+\frac{d+1}{2}+s)^3 \Gamma(a+\frac{d+1}{2}-s)^3 \Gamma(s)\Gamma(-s)\Gamma(\frac{1}{2}+s)\Gamma(\frac{1}{2}-s)}
	\, ds.
\end{align}

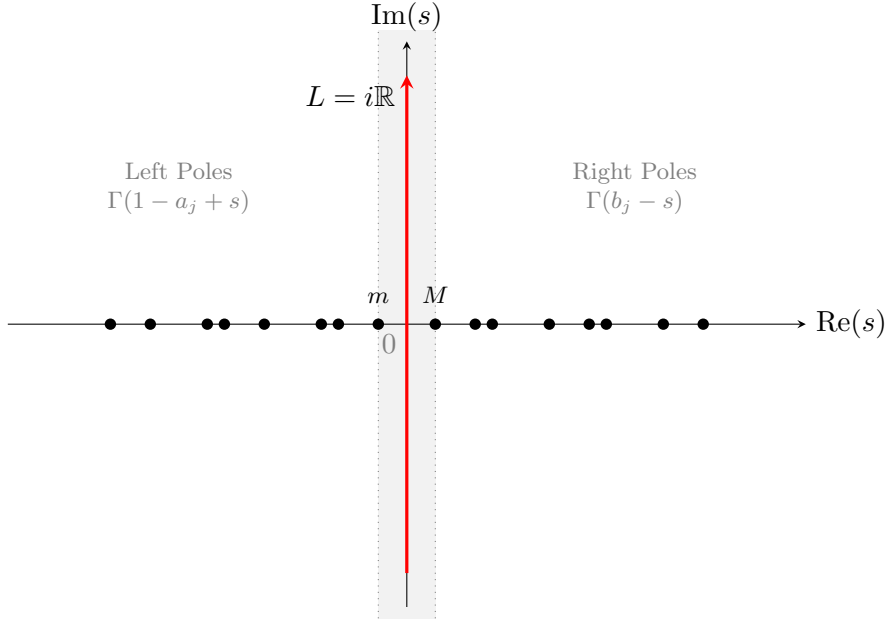
\begin{figure}[t]
    \centering
    \begin{tikzpicture}[scale=1.5, >=stealth]
        \def\a{0.6}       
        \def\eps{0}       
        \def\dotSize{0.05} 

        \fill[gray!10] (-0.25, -2.6) rectangle (0.25, 2.6);
        \draw[gray, dotted] (-0.25, -2.6) -- (-0.25, 2.6);
        \draw[gray, dotted] (0.25, -2.6) -- (0.25, 2.6);

        \draw[->] (-3.5, 0) -- (3.5, 0) node[right] {$\mathrm{Re}(s)$};
        \draw[->] (0, -2.5) -- (0, 2.5) node[above] {$\mathrm{Im}(s)$};

        \draw[very thick, red, ->] (0, -2.2) -- (0, 2.2) node[below left, black] {$L=i\RR$};

        \foreach \k in {0, 1, 2} { \fill[black] (-\a-\k, 0) circle (\dotSize); }
        \foreach \x in {-0.25, -0.75, -1.25, -1.75, -2.25} { \fill[black] (\x, 0) circle (\dotSize); }
        \node[above, font=\footnotesize] at (-0.25, 0.1) {$m$};

        \foreach \k in {0, 1, 2} { \fill[black] (\a+\k, 0) circle (\dotSize); }
        \foreach \x in {0.25, 0.75, 1.25, 1.75, 2.25} { \fill[black] (\x, 0) circle (\dotSize); }
        \node[above, font=\footnotesize] at (0.25, 0.1) {$M$};

        \node[font=\footnotesize, align=center, gray] at (-2, 1.2) {Left Poles\\ $\Gamma(1-a_j+s)$};
        \node[font=\footnotesize, align=center, gray] at (2, 1.2) {Right Poles\\ $\Gamma(b_j-s)$};
        \node[below left, gray] at (0,0) {$0$};
    \end{tikzpicture}
    \caption{The path of integration $L = i\RR$ separates the left pole families from the right pole families.}
    \label{fig:ContourGFunction}
\end{figure}

Let $h(a;s)$ be the integrand in \eqref{eq:SecondMomentMellinBarnes_d}. Since the Gamma function is meromorphic with simple poles at $0,-1,-2,\ldots$ and has no zeros, the reciprocal $1/\Gamma(\cdot)$ is an entire function. Consequently, possible poles of $h(a;s)$ can only come from the Gamma factors in the numerator. These occur at
\[
s=-a-n,\qquad s=a+n,\qquad s=\pm\Bigl(\tfrac{d-1}{4}+n\Bigr),\qquad s=\pm\Bigl(\tfrac{d+1}{4}+n\Bigr),
\qquad n\in\{0,1,2,\ldots\}.
\]
In the supercritical regime $\gamma>\gamma_c^{(d)}$, we have $\gamma c_d > \frac{d-1}{2}$. Consequently, the parameter $a = \frac{\gamma c_d}{2} + \frac{1-d}{4}$ satisfies $a > \frac{d-1}{4} + \frac{1-d}{4} = 0$. Because both $a > 0$ and $\frac{d-1}{4} > 0$ for $d \ge 2$, the poles are strictly separated by the imaginary axis:
\begin{itemize}
	\item The left poles lie in the half-plane $\operatorname{Re}(s)\le m$, where
	$m\coloneqq\max\{-a,-\tfrac{d-1}{4}\}<0$.
	\item The right poles lie in the half-plane $\operatorname{Re}(s)\ge M$, where
	$M\coloneqq\min\{a,\tfrac{d-1}{4}\}>0$.
\end{itemize}
Thus, the integrand $h(a;s)$ is holomorphic on the vertical strip $\{s\in\CC:|\operatorname{Re}(s)|<\min\{a,\frac{d-1}{4}\}\}$ containing the imaginary axis, and the contour integral in \eqref{eq:SecondMomentMellinBarnes_d} is well-defined along the path $L=i\RR$, see Figure \ref{fig:ContourGFunction}.

The integral in \eqref{eq:SecondMomentMellinBarnes_d} is a particular instance of the Meijer $G$-function. Generally, for integers $0 \le m \le q$ and $0 \le n \le p$, and complex parameters $a_1, \dots, a_p$ and $b_1, \dots, b_q$, the $G$-function is defined by the Mellin--Barnes type integral 
\begin{equation}\label{eq:DefGFunction}
	G_{p,q}^{m,n} \left( z \middle| \begin{matrix} a_1, \dots, a_p \\ b_1, \dots, b_q \end{matrix} \right)
	\coloneqq \frac{1}{2\pi i} \int_L
	\frac{\prod_{j=1}^m \Gamma(b_j - s)\prod_{j=1}^n \Gamma(1-a_j+s)}{
		\prod_{j=m+1}^q \Gamma(1-b_j+s)\prod_{j=n+1}^p \Gamma(a_j-s)}
	\, z^s \, ds,
\end{equation}
see \cite[Equation (1.1.1)]{MathaiSaxena}. The integration contour $L$ is chosen to separate the poles of the factors $\Gamma(b_j-s)$ for $j=1,\dots,m$ from those of the factors $\Gamma(1-a_j+s)$ for $j=1,\dots,n$, see \cite[page 2]{MathaiSaxena}. 

Comparing \eqref{eq:SecondMomentMellinBarnes_d} with \eqref{eq:DefGFunction} for $z=1$ and contour $L=i\RR$, we identify:
\begin{itemize}
	\item The dimensions are $m=5$, $n=5$, $p=10$, and $q=10$.
	\item Right poles ($b_j$): The factors $\Gamma(a-s)^3, \Gamma(\frac{d-1}{4}-s), \Gamma(\frac{d+1}{4}-s)$ yield
	\[ b_1=b_2=b_3=a,\quad b_4=\tfrac{d-1}{4},\quad b_5=\tfrac{d+1}{4}. \]
	\item Left poles ($a_j$): The factors $\Gamma(a+s)^3, \Gamma(\frac{d-1}{4}+s), \Gamma(\frac{d+1}{4}+s)$ yield
	\[ a_1=a_2=a_3=1-a,\quad a_4=\tfrac{5-d}{4},\quad a_5=\tfrac{3-d}{4}. \]
	\item Denominator parameters: The factors $\Gamma(a+\frac{d+1}{2}\pm s)^3$, $\Gamma(s)\Gamma(-s)$, and
	$\Gamma(\frac12+s)\Gamma(\frac12-s)$ correspond to
	\[
	a_6=a_7=a_8=a+\tfrac{d+1}{2},\quad a_9=0,\quad a_{10}=\tfrac12,
	\qquad
	b_6=b_7=b_8=\tfrac{1-d}{2}-a,\quad b_9=1,\quad b_{10}=\tfrac12 .
	\]
\end{itemize}
Since in the supercritical regime $\gamma>\gamma_c^{(d)}$, the parameter $a$ is strictly positive, ensuring that the defining conditions of the Meijer $G$-function are satisfied by the contour $L=i\RR$:
\begin{enumerate}
	\item Separation of poles: As established above, since $a>0$, the contour $L=i\RR$ strictly separates the left pole set from the right pole set.
	\item Convergence at $z=1$: Since $p=q=10$ and $m=n=5$, we are in the case $m+n=p$, where the integral converges absolutely on the vertical line if and only if the parameter summation satisfies $\operatorname{Re}(\sum_{j=1}^q b_j - \sum_{j=1}^p a_j) < -1$, see \cite[Section 1.1]{MathaiSaxena}. Evaluating the sums for our parameters, we find
	\[
	\sum_{j=1}^{10} b_j = 3-d \qquad \text{and} \qquad \sum_{j=1}^{10} a_j = d+7.
	\]
	The difference is $(3-d) - (d+7) = -2d - 4$. For any dimension $d \ge 2$, this difference is bounded above by $-8$, which is strictly less than $-1$. Thus, the integral converges absolutely for all dimensions.
\end{enumerate}
Collecting these parameters and including the prefactor, we have thus completed the proof.
\end{proof}

\subsection{Proofs of Corollaries \ref{cor:odd_dimensions}-\ref{cor:GammaToCritical_d}}

\begin{proof}[Proof of Corollary \ref{cor:odd_dimensions}]
	Starting from \eqref{eq:SecondMomentMellinBarnes_d}, we substitute \(d=2k+1\). Then
	\[
	\frac{d+1}{2}=k+1,
	\qquad
	\frac{d-1}{4}=\frac{k}{2},
	\qquad
	\frac{d+1}{4}=\frac{k+1}{2},
	\]
	and the parameter \(a\) becomes
	\[
	a=\frac{\gamma c_{2k+1}}{2}+\frac{1-(2k+1)}{4}
	=\frac{\gamma c_{2k+1}}{2}-\frac{k}{2}.
	\]
	Hence the Mellin--Barnes integrand is
	\[
	\frac{\Gamma(a+s)^3\Gamma(a-s)^3
		\Gamma\!\left(\frac{k}{2}+s\right)\Gamma\!\left(\frac{k+1}{2}+s\right)
		\Gamma\!\left(\frac{k}{2}-s\right)\Gamma\!\left(\frac{k+1}{2}-s\right)}
	{\Gamma(a+k+1+s)^3\Gamma(a+k+1-s)^3
		\Gamma(s)\Gamma(-s)\Gamma\!\left(\frac12+s\right)\Gamma\!\left(\frac12-s\right)}.
	\]
	
	We simplify the \(a\)-dependent Gamma factors using
	\(\Gamma(z+n)=\Gamma(z)\prod_{r=0}^{n-1}(z+r)\) with \(n=k+1\):
	\[
	\frac{\Gamma(a+s)^3\Gamma(a-s)^3}
	{\Gamma(a+k+1+s)^3\Gamma(a+k+1-s)^3}
	=
	\frac{1}{\prod_{m=0}^{k}(a+m+s)^3(a+m-s)^3}
	=
	\frac{1}{\prod_{m=0}^{k}\bigl((a+m)^2-s^2\bigr)^3}.
	\]
	
	Next, for the dimension-dependent part, apply the Legendre duplication formula \eqref{eq:Legendre}
	with \(z=\frac{k}{2}+s\) and \(z=\frac{k}{2}-s\), and similarly in the denominator with
	\(z=s\) and \(z=-s\). This gives
	\[
	\frac{\Gamma\!\left(\frac{k}{2}+s\right)\Gamma\!\left(\frac{k+1}{2}+s\right)
		\Gamma\!\left(\frac{k}{2}-s\right)\Gamma\!\left(\frac{k+1}{2}-s\right)}
	{\Gamma(s)\Gamma\!\left(\frac12+s\right)\Gamma(-s)\Gamma\!\left(\frac12-s\right)}
	=
	2^{-2k}\frac{\Gamma(k+2s)\Gamma(k-2s)}{\Gamma(2s)\Gamma(-2s)}.
	\]
	Since \(k\in\mathbb{N}\), repeated use of \(\Gamma(z+n)=\Gamma(z)\prod_{j=0}^{n-1}(z+j)\) yields
	\[
	\frac{\Gamma(k+2s)}{\Gamma(2s)}=\prod_{j=0}^{k-1}(2s+j),
	\qquad
	\frac{\Gamma(k-2s)}{\Gamma(-2s)}=\prod_{j=0}^{k-1}(-2s+j),
	\]
	and therefore
	\[
	2^{-2k}\frac{\Gamma(k+2s)\Gamma(k-2s)}{\Gamma(2s)\Gamma(-2s)}
	=
	2^{-2k}\prod_{j=0}^{k-1}(j+2s)(j-2s)
	=
	\prod_{j=0}^{k-1}\left(\frac{j^2}{4}-s^2\right)
	=P_k(s).
	\]
	
	Thus the Mellin--Barnes integral in the representation of $\EE\cH^{2k+1}(Z_o)^2$ reduces to
	\[
	\frac{1}{2\pi i}\int_{-i\infty}^{i\infty}\frac{P_k(s)}{Q_k(s)}\,ds.
	\]
	Combining this with the prefactor in \eqref{eq:SecondMomentMellinBarnes_d} (specialized to \(d=2k+1\))
	gives
	\[
	\EE \cH^{2k+1}(Z_o)^2
	=
	2\pi^{2k-1}\Gamma\!\left(k+\frac12\right)^2\left(\frac{\gamma}{4}\right)^3
	\cdot
	\frac{1}{2\pi i}\int_{-i\infty}^{i\infty}\frac{P_k(s)}{Q_k(s)}\,ds.
	\]
	
	Since \(\gamma>\gamma_c^{(2k+1)}\), we have \(a>0\), so the poles
	\[
	s=-(a+m),\qquad m=0,\dots,k,
	\]
	lie in the open left half-plane. Moreover, the integrand is rational and satisfies
	\[
	\frac{P_k(s)}{Q_k(s)} \leq C|s|^{-4k-6}\qquad \text{as}\;\;|s|\to\infty
	\]
    for some constant $C>0$.
	By the residue theorem, closing the contour to the left by a semicircle yields
	\[
	\frac{1}{2\pi i}\int_{-i\infty}^{i\infty}\frac{P_k(s)}{Q_k(s)}\,ds
	=
	\sum_{m=0}^{k}\Res\limits_{s=-(a+m)}\frac{P_k(s)}{Q_k(s)}.
	\]
	Substituting this into the previous identity proves the claim.
\end{proof}

\begin{proof}[Proof of Corollary \ref{cor:GammaToInfinity_d}]
    We start from the integral representation \eqref{eq:SecondMomentIntermediate_d},
	\[
	\EE\cH^d(Z_o)^2
	=
	K(d,\gamma)
	\int_0^\infty
	R_a(\lambda)\,P_d(\lambda)\,d\lambda,
	\]
	where
	\[
	R_a(\lambda)\coloneqq
	\frac{\left| \Gamma\left(a + i\frac{\lambda}{2}\right) \right|^6}{
		\left| \Gamma\left(a + \frac{d+1}{2} + i\frac{\lambda}{2}\right) \right|^6},
	\qquad
	P_d(\lambda)\coloneqq
	\frac{\left| \Gamma\left(\frac{d-1}{2} + i\lambda\right) \right|^2}{
		\left| \Gamma(i\lambda) \right|^2},
	\]
	and
	\[
	a=\frac{\gamma c_d}{2}+\frac{1-d}{4},
	\qquad
	K(d,\gamma)=\frac{\gamma^3}{2^{d+5}}\pi^{d-3}\Gamma\!\left(\frac d2\right)^2.
	\]
	As $\gamma\to\infty$, we have $a\to\infty$ and
	\[
	a\sim \frac{\gamma c_d}{2}
	=
	\frac{\gamma\,\Gamma\!\left(\frac d2\right)}{4\sqrt{\pi}\,\Gamma\!\left(\frac{d+1}{2}\right)}.
	\]

	To obtain the asymptotics of the integral, we put $\lambda=2au$.
	Then
	\[
	a^{2d+3}\int_0^\infty R_a(\lambda)P_d(\lambda)\,d\lambda
	=
	\int_0^\infty f_a(u)\,du,
	\]
	where $f_a(u)\coloneqq 2a^{2d+4}R_a(2au)P_d(2au)$ for $u\geq 0$.

	We first determine the pointwise limit of $f_a(u)$. By the asymptotic expansion
	\cite[Formula 5.11.13]{NIST},
    \begin{equation}\label{eq:gamma_ratio}
    \frac{\Gamma(z+\alpha)}{\Gamma(z+\beta)}
	=
	z^{\alpha-\beta}(1+O(1/z)),
	\qquad |z|\to\infty,
    \end{equation}
	uniformly in sectors $|\arg z|\le \pi-\delta$, $\delta > 0$.
    We apply this with
	\[
	z=a(1+iu),\qquad \alpha=0,\qquad \beta=\frac{d+1}{2}.
	\]
	Since $a>0$ and $\arg(1+iu)\in[-\frac{\pi}{2},\frac{\pi}{2}]$, the sector condition is satisfied uniformly in $u\ge 0$. Hence, for every fixed $u\ge 0$,
	\[
	R_a(2au)
	=
	\frac{|\Gamma(a(1+iu))|^6}{|\Gamma(a(1+iu)+\frac{d+1}{2})|^6}
	\sim
	a^{-3(d+1)}|1+iu|^{-3(d+1)}
	=
	a^{-3(d+1)}(1+u^2)^{-\frac{3(d+1)}{2}}.
	\]
	Moreover, for every fixed $u>0$, another application of the same asymptotic formula with
	$z=2iau$, $\alpha=\frac{d-1}{2}$ and $\beta=0$ yields
	\[
	P_d(2au)
	=
	\frac{\left| \Gamma\left(\frac{d-1}{2} + 2iau\right) \right|^2}{
		\left| \Gamma(2iau) \right|^2}
	\sim
	|2iau|^{d-1}
	=
	(2au)^{d-1}.
	\]
	Therefore, for every fixed $u>0$,
	\[
	f_a(u)
	\to
	2^d\,u^{d-1}(1+u^2)^{-\frac{3(d+1)}{2}},
	\qquad\text{as } a\to\infty.
	\]
	For $u=0$ we have $f_a(0)=0$ for all $a$, so the same limit holds on all of $[0,\infty)$.

	Next we establish an integrable majorant. Since the above Gamma-ratio estimate is uniform in sectors, there exist constants $C_1>0$ and $a_0>0$ such that for all $a\ge a_0$ and all $u\ge 0$,
	\[
	R_a(2au)\le C_1 a^{-3(d+1)}(1+u^2)^{-\frac{3(d+1)}{2}}.
	\]
	Furthermore, there exists a constant $C_2>0$ such that
	\[
	0\le P_d(\lambda)\le C_2(1+\lambda^{d-1}),
	\qquad \lambda\ge 0.
	\]
	Indeed, for $\lambda\ge 1$ this follows from Stirling's formula or by the Gamma ratio bound above, while on $[0,1]$ the function
	$\lambda\mapsto P_d(\lambda)$ is continuous and bounded, with $P_d(0)=0$.
	Hence, for all $a\ge a_0$ and all $u\ge 0$,
	\begin{align*}
		f_a(u)
		&\le
		2C_1a^{2d+4}a^{-3(d+1)}(1+u^2)^{-\frac{3(d+1)}{2}}\,C_2(1+(2au)^{d-1}) \\
		&\le
		C( a^{1-d}+u^{d-1})(1+u^2)^{-\frac{3(d+1)}{2}} \\
		&\le
		C'(1+u^{d-1})(1+u^2)^{-\frac{3(d+1)}{2}},
	\end{align*}
	where $C,C'>0$ are constants independent of $a$ and $u$. Since
	$(1+u^{d-1})(1+u^2)^{-\frac{3(d+1)}{2}}$
	is integrable on $[0,\infty)$, the dominated convergence theorem yields
	\[
	a^{2d+3}\int_0^\infty R_a(\lambda)P_d(\lambda)\,d\lambda
	\longrightarrow
	2^d\int_0^\infty u^{d-1}(1+u^2)^{-\frac{3(d+1)}{2}}\,du.
	\]

	It remains to compute the limit integral. Using the substitution $t=u^2$, so that
	$du=\frac12 t^{-1/2}\,dt$, we obtain
	\begin{align*}
		2^d\int_0^\infty u^{d-1}(1+u^2)^{-\frac{3(d+1)}{2}}\,du
		&=
		2^{d-1}\int_0^\infty t^{\frac d2-1}(1+t)^{-\frac{3d+3}{2}}\,dt \\
		&=
		2^{d-1}B\!\left(\frac d2,d+\frac32\right) \\
		&=
		2^{d-1}\frac{\Gamma\!\left(\frac d2\right)\Gamma\!\left(d+\frac32\right)}{
			\Gamma\!\left(\frac{3d+3}{2}\right)},
	\end{align*}
    where $B(\,\cdot\,,\,\cdot\,)$ is the Beta function.
	Consequently,
	\[
	\int_0^\infty R_a(\lambda)P_d(\lambda)\,d\lambda
	\sim
	2^{d-1}\frac{\Gamma\!\left(\frac d2\right)\Gamma\!\left(d+\frac32\right)}{
		\Gamma\!\left(\frac{3d+3}{2}\right)}
	a^{-2d-3}.
	\]
	Combining this with the prefactor $K(d,\gamma)$ and using the asymptotics $a\sim{\gamma c_d\over 2}$,
	we conclude that
	\begin{align*}
		\EE\cH^d(Z_o)^2
		&\sim
		\left(\frac{\gamma^3}{2^{d+5}}\pi^{d-3}\Gamma\!\left(\frac d2\right)^2\right)
		\left(
		2^{d-1}\frac{\Gamma\!\left(\frac d2\right)\Gamma\!\left(d+\frac32\right)}{
			\Gamma\!\left(\frac{3d+3}{2}\right)}
		\right)
		\left(
		\frac{\gamma\,\Gamma\!\left(\frac d2\right)}{
			4\sqrt{\pi}\,\Gamma\!\left(\frac{d+1}{2}\right)}
		\right)^{-2d-3} \\
		&=
		\gamma^{-2d}\,
		2^{4d}\pi^{\frac{4d-3}{2}}
		\frac{\Gamma\!\left(d+\frac32\right)}{\Gamma\!\left(\frac{3d+3}{2}\right)}
		\frac{\Gamma\!\left(\frac{d+1}{2}\right)^{2d+3}}{
			\Gamma\!\left(\frac d2\right)^{2d}}.
	\end{align*}
	This proves the claim.
\end{proof}

\begin{proof}[Proof of Corollary \ref{cor:GammaToCritical_d}]
	 We start from \eqref{eq:SecondMomentIntermediate_d}, which says that
	\[
	\EE\cH^d(Z_o)^2
	=
	K(d,\gamma)\int_0^\infty R_a(\lambda)\,P_d(\lambda)\,d\lambda,
	\]
	where
	\[
	R_a(\lambda)\coloneqq
	\frac{\left| \Gamma\left(a + i\frac{\lambda}{2}\right) \right|^6}{
		\left| \Gamma\left(a + \frac{d+1}{2} + i\frac{\lambda}{2}\right) \right|^6},
	\qquad
	P_d(\lambda)\coloneqq
	\frac{\left| \Gamma\left(\frac{d-1}{2} + i\lambda\right) \right|^2}{
		\left| \Gamma(i\lambda) \right|^2},
	\]
	and
	\[
	a=\frac{\gamma c_d}{2}+\frac{1-d}{4}
	=\frac{c_d}{2}\bigl(\gamma-\gamma_c^{(d)}\bigr),
	\qquad
	K(d,\gamma)=\frac{\gamma^3\pi^{d-3}}{2^{d+5}}\Gamma\!\left(\frac d2\right)^2.
	\]
	Hence \(\gamma\downarrow \gamma_c^{(d)}\) is equivalent to \(a\downarrow 0\).

	We decompose
	\[
	I(a)\coloneqq\int_0^\infty R_a(\lambda)P_d(\lambda)\,d\lambda
	= I_1(a)+I_2(a),
	\]
	where
	\[
	I_1(a)\coloneqq\int_0^1 R_a(\lambda)P_d(\lambda)\,d\lambda,
	\qquad
	I_2(a)\coloneqq\int_1^\infty R_a(\lambda)P_d(\lambda)\,d\lambda.
	\]

	We first show that the tail term is negligible on the scale \(a^{-3}\).
	By \eqref{eq:gamma_ratio}, uniformly in \(a\in[0,1]\) and \(\lambda\ge1\),
	$
	R_a(\lambda)
	\le C \lambda^{-3(d+1)}
	$.	Moreover, again by \eqref{eq:gamma_ratio},
	\[
	P_d(\lambda)\le C\lambda^{d-1},\qquad \lambda\ge1.
	\]
	Therefore
	\[
	R_a(\lambda)P_d(\lambda)\le C\lambda^{-2d-4},\qquad \lambda\ge1.
	\]
	Since \(\lambda\mapsto \lambda^{-2d-4}\) is integrable on \([1,\infty)\), we obtain
	$\sup_{0<a\le1} I_2(a)<\infty$,
	and hence $a^3 I_2(a)\longrightarrow 0$ as $a\downarrow0$.

	It remains to analyse \(I_1(a)\). Substituting \(\lambda=2au\) yields
	\[
	a^3 I_1(a)=\int_0^{1/(2a)} g_a(u)\,du,
	\qquad
	g_a(u)\coloneqq 2a^4R_a(2au)P_d(2au).
	\]
	Fix \(u\ge0\). Since \(\Gamma(z)\sim 1/z\) as \(z\to0\), we have
	\[
	a^6\bigl|\Gamma(a(1+iu))\bigr|^6\longrightarrow (1+u^2)^{-3}, \qquad a\downarrow 0.
	\]
	Furthermore,
	\[
	\left|\Gamma\!\left(a+\frac{d+1}{2}+iau\right)\right|^6
	\longrightarrow
	\Gamma\!\left(\frac{d+1}{2}\right)^6, \qquad a\downarrow 0,
	\]
	and
	\[
	P_d(2au)\sim 4a^2u^2\,\Gamma\!\left(\frac{d-1}{2}\right)^2,\qquad a\downarrow 0.
	\]
	Consequently,
	\[
	g_a(u)\longrightarrow
	8\,\frac{\Gamma\!\left(\frac{d-1}{2}\right)^2}{
		\Gamma\!\left(\frac{d+1}{2}\right)^6}
	\frac{u^2}{(1+u^2)^3},\qquad a\downarrow 0.
	\]

	To justify dominated convergence, note that on the support of \(g_a\) one has
	\(0\le 2au\le1\). Using \(\Gamma(z)=\Gamma(1+z)/z\), we obtain
	\[
	a\,\Gamma(a(1+iu))
	=
	\frac{\Gamma(1+a(1+iu))}{1+iu},
	\]
	and since \(1+a(1+iu)\) ranges in a compact subset of \(\CC\), it follows that
	\[
	a^6|\Gamma(a(1+iu))|^6\le C'(1+u^2)^{-3}
	\]
    for some constant $C'>0$.
	Moreover,
	\[
	\left|\Gamma\!\left(a+\frac{d+1}{2}+iau\right)\right|
	\]
	is bounded away from \(0\) and \(\infty\), uniformly in \(0<a\le1\) and
	\(0\le u\le 1/(2a)\). Finally, the function
	$
	\lambda\mapsto \frac{P_d(\lambda)}{\lambda^2}
	$
	extends continuously to \(0\), hence is bounded on \([0,1]\), so that
	$
	P_d(2au)\le C a^2u^2
	$ for some constant $C>0$.
	Therefore
	\[
	0\le g_a(u)\mathbf 1_{\{u\le1/(2a)\}}
	\le C\frac{u^2}{(1+u^2)^3},
	\qquad u\ge0,
	\]
	and the right-hand side is integrable on \([0,\infty)\). Thus, by the dominated convergence theorem,
	\[
	a^3 I_1(a)
	\longrightarrow
	8\,\frac{\Gamma\!\left(\frac{d-1}{2}\right)^2}{
		\Gamma\!\left(\frac{d+1}{2}\right)^6}
	\int_0^\infty \frac{u^2}{(1+u^2)^3}\,du.
	\]
	Since
	\[
	\int_0^\infty \frac{u^2}{(1+u^2)^3}\,du=\frac{\pi}{16},
	\]
	we obtain
	\[
	a^3 I(a)\longrightarrow
	\frac{\pi}{2}\,
	\frac{\Gamma\!\left(\frac{d-1}{2}\right)^2}{
		\Gamma\!\left(\frac{d+1}{2}\right)^6},\qquad\text{or equivalently}\qquad I(a)\sim
	\frac{\pi}{2}\,
	\frac{\Gamma\!\left(\frac{d-1}{2}\right)^2}{
		\Gamma\!\left(\frac{d+1}{2}\right)^6}\,
	a^{-3}.
	\]
	Since \(K(d,\gamma)\to K(d,\gamma_c^{(d)})\) as \(\gamma\downarrow\gamma_c^{(d)}\), it follows that
	\[
	\EE\cH^d(Z_o)^2
	\sim
	K(d,\gamma_c^{(d)})
	\frac{\pi}{2}\,
	\frac{\Gamma\!\left(\frac{d-1}{2}\right)^2}{
		\Gamma\!\left(\frac{d+1}{2}\right)^6}\,
	a^{-3}.
	\]
	Using
	\[
	a=\frac{c_d}{2}\bigl(\gamma-\gamma_c^{(d)}\bigr),
	\qquad
	a^{-3}=\frac{8}{c_d^3}\bigl(\gamma-\gamma_c^{(d)}\bigr)^{-3},
	\]
	we arrive at
	\[
	\EE\cH^d(Z_o)^2
	\sim
	\left[
	K(d,\gamma_c^{(d)})
	\frac{4\pi}{c_d^3}
	\frac{\Gamma\!\left(\frac{d-1}{2}\right)^2}{
		\Gamma\!\left(\frac{d+1}{2}\right)^6}
	\right]
	\bigl(\gamma-\gamma_c^{(d)}\bigr)^{-3}.
	\]
	The constant simplifies as follows:
	\[
	K(d,\gamma_c^{(d)}) \frac{4\pi}{c_d^3}
	=
	\frac{\pi^{d-2}}{2^{d+3}} \Gamma\!\left(\frac d2\right)^2
	\left(\frac{\gamma_c^{(d)}}{c_d}\right)^3,
	\]
	and
	\[
	\left(\frac{\gamma_c^{(d)}}{c_d}\right)^3
	=
	8\pi^3(d-1)^3
	\frac{\Gamma\!\left(\frac{d+1}{2}\right)^6}{
		\Gamma\!\left(\frac d2\right)^6}.
	\]
	Hence
	\[
	\EE\cH^d(Z_o)^2
	\sim
	\frac{\pi^{d+1}}{2^d}(d-1)^3
	\frac{\Gamma\!\left(\frac{d-1}{2}\right)^2}{
		\Gamma\!\left(\frac d2\right)^4}
	\bigl(\gamma-\gamma_c^{(d)}\bigr)^{-3}.
	\]
	Finally, using
	$
	\Gamma\!\left(\frac{d-1}{2}\right)
	=
	\frac{2}{d-1}\Gamma\!\left(\frac{d+1}{2}\right),
	$
	we obtain
	\[
	\EE\cH^d(Z_o)^2
	\sim
	\frac{\pi^{d+1}}{2^{d-2}}(d-1)
	\frac{\Gamma\!\left(\frac{d+1}{2}\right)^2}{
		\Gamma\!\left(\frac d2\right)^4}
	\bigl(\gamma-\gamma_c^{(d)}\bigr)^{-3},
	\]
	which proves the claim.
\end{proof}

\subsection*{Acknowledgement}

We would like to thank Michael Voit (Dortmund) for useful pointers to the literature and Tobias Hartnick (Karlsruhe) for interesting discussions regarding harmonic analysis in hyperbolic space.\\ The second author was supported by SPP 2265 \textit{Random Geometric Systems}.

\appendix

\end{document}